\def\clap#1{\hbox to 0pt{\hss#1\hss}}
\DeclareFontFamily{OMS}{rsfs}{\skewchar\font'60}
\DeclareFontShape{OMS}{rsfs}{m}{n}{<-5>rsfs5 <5-7>rsfs7 <7->rsfs10 }{}
\DeclareSymbolFont{rsfs}{OMS}{rsfs}{m}{n}
\DeclareSymbolFontAlphabet{\scr}{rsfs}
\newcommand{\Q}{\mathbb{Q}}
\newcommand{\sE}{\scr{E}}
\newcommand{\sF}{\scr{F}}
\newcommand{\sG}{\scr{G}}
\newcommand{\sO}{\scr{O}}
\newcommand{\sQ}{\scr{Q}}
\newcommand{\sT}{\scr{T}}
\newcommand{\bC}{\mathbb{C}}
\newcommand{\bP}{\mathbb{P}}
\newcommand{\bQ}{\mathbb{Q}}
\newcommand{\bR}{\mathbb{R}}
\DeclareMathOperator{\an}{an}
\DeclareMathOperator{\codim}{codim}
\DeclareMathOperator{\Gal}{Gal}
\DeclareMathOperator{\rank}{rank}
\DeclareMathOperator{\reg}{reg}
\DeclareMathOperator{\id}{{id}}
\DeclareMathOperator{\chern}{{c}}
\DeclareMathOperator{\ch}{{ch}}
\newcommand{\into}{\hookrightarrow}
\newcommand{\wtilde}{\widetilde}
\newcounter{thisthm}
\newcommand{\iref}[1]{(\thesection.\the\value{thisthm}.\the\value{#1})}
\theoremstyle{plain}    
\newtheorem{thm}{Theorem}[section]
\numberwithin{equation}{thm}
\numberwithin{figure}{section}
\theoremstyle{plain}    
\newtheorem{cor}[thm]{Corollary}
\newtheorem{lem}[thm]{Lemma}
\theoremstyle{plain}    
\newtheorem{prop}[thm]{Proposition}
\newtheorem{proclaim-special}[thm]{\specialthmname}
\theoremstyle{remark}
\newtheorem{setup}[thm]{Setup} 
\newtheorem{defn}[thm]{Definition}
\newtheorem{rem}[thm]{Remark}
\newtheorem{explanation}[thm]{Explanation}
\newtheorem{subclaim}[equation]{Claim} 
\newtheorem*{claim*}{Claim}
\newtheoremstyle{bozont-remark}{3pt}{3pt}%
     {}%         Body font
     {}%         Indent amount (empty = no indent, \parindent = para indent)
     {\it}% Thm head font
     {.}%        Punctuation after thm head
     {.5em}%     Space after thm head (\newline = linebreak)
     {\thmname{#1}\thmnumber{ #2}: \thmnote{\sc #3}}%         Thm head spec
\theoremstyle{bozont-remark}
\def\factor#1.#2.{\left. \raise 2pt\hbox{$#1$} \right/\hskip -2pt\raise
  -2pt\hbox{$#2$}}
\newlength{\swidth}
\newenvironment{enumerate-p}{
  \begin{enumerate}}
  {\setcounter{equation}{\value{enumi}}\end{enumerate}}
\definecolor{tomato}{RGB}{180,62,39}
\definecolor{forrest}{RGB}{81,133,49}
\definecolor{lighttomato}{RGB}{253,65,65}
\definecolor{lightforrest}{RGB}{145,237,87}
\definecolor{mygreen}{RGB}{40,104,69}
\definecolor{mygreen2}{RGB}{3,149,39}
\definecolor{darkolivegreen}{RGB}{102,118,75}
\definecolor{cranegreen}{RGB}{102,118,75}
\definecolor{mydarkblue}{RGB}{10,92,153}
\definecolor{myblue}{RGB}{57,222,186}
\definecolor{pinkish}{RGB}{213,83,222}
\definecolor{colD}{RGB}{213,83,222}
\definecolor{defb}{RGB}{213,83,222}
\definecolor{goldenrod}{RGB}{225,115,69}
\definecolor{mauve}{RGB}{224, 176, 255}
\definecolor{fuchsia}{RGB}{255, 0, 255}
\definecolor{lavender}{RGB}{230, 230, 250}
\definecolor{gold}{RGB}{255, 215, 0}
\definecolor{orange}{RGB}{255, 127, 0}
\definecolor{maroon}{RGB}{123, 17, 19}
\definecolor{brightmaroon}{RGB}{195, 33, 72}
\definecolor{richmaroon}{RGB}{176, 48, 96}
\definecolor{green}{RGB}{3,149,39}
\title[A characterization of quotients of Abelian varieties]{A characterization of finite quotients of Abelian varieties}
\author{Steven Lu and Behrouz Taji}
\address{Steven Lu, The Department of Mathematics, University of Quebec at Montreal, succursale centre-ville, Montreal, QC, H3C 3P8, Canada }
\email{\href{mailto:lu.steven@uqam.ca}{lu.steven@uqam.ca}}
\address{Behrouz Taji, Albert-Ludwigs-Universität Freiburg,  Mathematisches Institut,  Eckerstra{\ss}e 1, 79104 Freiburg, Germany}
\email{\href{mailto:behrouz.taji@math.uni-freiburg.de }{behrouz.taji@math.uni-freiburg.de }}
\thanks{Steven Lu was supported by NSERC and TIMS during the write-up of this paper.
  Behrouz Taji was partially supported by the DFG-Graduiertenkolleg GK1821
  ``Cohomological Methods in Geometry'' at Freiburg.}
\keywords{Classification theory, Uniformization, Torus Quotients, Minimal Model Program, KLT Singularities.}
\subjclass[2010]{14B05, 14J17, 14J32, 14E20, 14L30, 32J26, 32J27, 32Q30, 32Q26}
\begin{document}

\begin{abstract} We provide a characterization of quotients of Abelian varieties by finite groups actions that are free in codimension-one via vanishing conditions on the orbifold Chern classes. The characterization is given among a class of varieties with singularities that are more general than quotient singularities, namely among the class of klt varieties.
Furthermore, for a semistable (respectively stable) reflexive ${\mathcal O}_{X}$-module $\sE$ with zero first and second orbifold Chern classes over such a variety $X$, we show that $\sE|_{X_{\reg}^{\an}}$ is locally-free and flat, given by a linear (irreducible unitary) representation of $\pi_1(X_{\reg})$, and that it extends over a finite Galois cover $\wtilde X$ of $X$ \'etale over $X_{\reg}$ to a locally-free and flat sheaf given by an equivariant linear (irreducible unitary) representation of $\pi_1(\wtilde X)$. These are generalizations to the singular setting that is more general than any orbifold strengthenings of the classical correspondences of Donaldson-Uhlenbeck-Yau-Simpson.
%
%the invariant part of an equivariant locally-free and flat sheaf on a finite Galois cover of $X$ that is \'etale over $X_{\reg}$.

\end{abstract}

\maketitle

\vspace{-3mm}

\section{Introduction}

We provide sufficient conditions for a \emph{(semi)stable} reflexive sheaf over a normal projective varietiy $X$ to be locally-free and flat on the smooth locus of $X$ and also across the singular locus up to a finite cover of $X$. 
The latter result depends for this paper crucially on a recent result of \cite{GKP14a} on the existence of a suitable finite cover (in the case $X$ is klt). A characterization of quotients of Abelian varieties by finite groups acting freely in codimension-one follows. This is achieved by tracing a correspondence between \emph{polystable} (respectively \emph{stable}) reflexive sheaves with zero Chern classes and (irreducible) unitary representations of the fundamental group (see (\ref{invariant-HE}) and Theorem~\ref{NS-result}) that goes back to the celebrated results of Narasimhan-Seshadri~\cite{NS65}, Donaldson~\cite{Don87}, and Uhlenbeck-Yau~\cite{UY86} on stable holomorphic vector bundles:\\[-3mm]

\begin{changemargin}{0.5cm}{0.5cm}
\refstepcounter{equation}\theequation\label{NS}. \emph{On a compact K\"ahler manifold $X$ of dimension $n$ with a K\"ahler form $w$, a vector bundle $\sE$ is stable with vanishing first and second Chern classes, that is
 $$\chern_1(\sE)_{\bR}=0 \ \ \  \mbox{and} \   \   \int_X \chern_2(\sE)\wedge [w]^{n-2}=0,$$
if and only if it comes from an irreducible unitary representation of %the fundamental group 
$\pi_1(X)$}.\\[-3mm]
\end{changemargin}

We recall that the notion of stability (in the sense of Mumford-Takemato) of a torsion-free coherent sheaf $\sF$ on such a manifold requires the \emph{slope} of $\sF$ with respect to $w$: $$\mu_w(\sF):=\int_X\frac{\chern_1(\sF)\wedge [w]^{n-1}}{\rank(\sF)}.$$ We say that a torsion-free coherent sheaf $\sE$ is $[w]$-(\emph{semi})\emph{stable} if the inequality 
\begin{equation}\label{stability}
  \mu_w(\sG)<\mu_w(\sE)\ \ \ \ 
  (\mbox{respectively}\ \mu_w(\sG)\le\mu_w(\sE))
    \end{equation}
holds for every (torsion-free) coherent subsheaf $\sG$ of $\sE$ with
$0<\rank(\sG)<\rank(\sE)$. 
This notion generalizes in the projective category to the case 
when $X$ is an $n$-dimensional normal projective
variety with $n-1$ ample divisors $H_1,\dots,H_{n-1}$. In this case, 
a torsion-free coherent sheaf $\sE$ is said to be (semi)stable with respect to the polarization 
$\hbar :=(H_1,\ldots,H_{n-1})$ if (\ref{stability}) holds with 
the slope of a subsheaf $\sF$ above replaced by 
$$\mu_{\hbar}(\sF):=\chern_1(\sF)\cdot H_1\cdot \ldots\cdot H_{n-1}
/{\rank(\sF)}.$$
Note that this is well defined since 
$\chern_1(\sF)=\chern_1(\det \sF)$ and $\det \sF$ is invertible
outside the singular locus of $X$, which is of codimension two
or more. We say that a reflexive (or torsion-free) coherent sheaf $\sE$ 
on $X$ is \emph{generically semi-positive} if all its torsion-free quotients $\sF$ have semipositive slopes
$\mu_{\hbar}(\sF)$ with respect to every polarization 
$\hbar :=(H_1,\ldots,H_{n-1})$ with all $H_i$ ample.
We note that if $\det \sE$ is numerically trivial, then this
condition is equivalent to the semistability of $\sE$ (or of the dual of 
$\sE$) with respect to all such polarizations. Also, in the case $\sE$ is generically semi-positive or generically semi-negative and 
$\det \sE$ is $\bQ$-Cartier, then $\det \sE$ (or equivalently $\chern_1 (\sE)$) is numerically trivial if and only if $\chern_1(\sE)\cdot 
H_1\cdot \ldots\cdot H_{n-1}=0$ for some $(n-1)$-tuple of ample divisors $(H_1\cdot \ldots\cdot H_{n-1})$, see Lemma~\ref{triviality:2}. An important example of a generically semi-positive %reflexive 
sheaf is the cotangent sheaf of a non-uniruled normal projective variety  \cite{Miy85}. \\[-2mm]

Throughout this paper, we work with a normal \emph{complex} projective variety $X$ having an orbifold structure (i.e. having only quotient singularities) 
in codimension-$2$ (see Section~\ref{Q-stuff}). This means 
that, if we cut down $X$ by $(n-2)$ very ample divisors, the general resulting surface  has only isolated quotient singularities, and hence inherits an orbifold structure (or $\bQ$-structure). Hence, there is a well-defined intersection pairing between $\bQ$-Chern classes $\widehat \chern_2(\sF)$, $\widehat \chern_1^2(\sF)$ and $(n-2)$-tuples of ample divisors on $X$ and similarly for the second $\bQ$-Chern character %given by
${\widehat\ch_2}(\sF):= (\widehat \chern_1^2(\sF) - 2\widehat \chern_2)/2$, see section~\ref{prelim}. 
The same holds for the intersection pairing between the first $\bQ$-Chern class ($\bQ$-Chern character) $\widehat \chern_1(\sF)(={\widehat\ch_1}(\sF))$ and $(n-1)$-tuples of ample divisors.
In this context, we have the following 
analog of (\ref{NS}) for a normal projective variety $X$  (see Section~\ref{KH} 
for its proof). 

\begin{thm}\label{NS-result} 
%[Narasimhan-Seshadri Correspondence for Reflexive Sheaves]
Let $X$ be an $n$-dimensional klt projective variety, $X_{\reg}$ its nonsingular locus and $\sF$ a reflexive coherent sheaf on $X$. Then (the analytification of) $\sF|_{X_{\reg}}$ comes from an irreducible unitary representation (respectively, an unitary representation) of $\pi_1(X_{\reg})$ if and only if, for some (and in fact for all) ample divisors 
$H_1,\ldots,H_{n-1}$ on $X$, we have:
\begin{enumerate}\label{assumps}
\item \label{assume:1} The reflexive sheaf $\sF$ is stable (respectively, polystable) with respect to the polarization $\hbar:=(H_1,\ldots,H_{n-1})$.\\[-3mm]
\item \label{assume:2} The first and second $\bQ$-Chern characters of $\sF$ verify the vanishing conditions:
\begin{flalign*}
& \widehat \ch_1(\sF)\cdot H_1\cdot \ldots \cdot H_{n-1}=0,\\
& \widehat \ch_2(\sF)\cdot H_1\cdot \ldots \cdot H_{n-2}=0.
\end{flalign*}
\end{enumerate}
In particular, $\sF$ is locally free on $X_{\reg}$ and is generically semi-positive in this case.
\end{thm}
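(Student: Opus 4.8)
The plan is to lift the classical smooth, compact statement (\ref{NS}) to the singular projective setting by combining the restriction technology for (semi)stable reflexive sheaves with the analytic theory of Hermitian--Einstein metrics, using the codimension-two quotient-singularity hypothesis only to make sense of the $\bQ$-Chern numbers and to run the equality case of the Bogomolov--Gieseker inequality. The substantial implication is the ``if'' direction, so I would assume conditions \iref{assume:1} and \iref{assume:2} and freely invoke four ingredients: the restriction theorem of Mehta--Ramanathan--Flenner in its form for reflexive sheaves on normal varieties; the Bando--Siu existence theory for admissible Hermitian--Einstein metrics on stable reflexive sheaves; the orbifold Bogomolov--Gieseker inequality for $\bQ$-Chern classes on a surface with isolated quotient singularities; and its equality case (L\"ubke, Simpson, Bando--Siu) identifying the equality locus with projective flatness.

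Concretely, for the ``if'' direction: by Lemma~\ref{triviality:2} the first condition in \iref{assume:2} says that $\det\sF$ is numerically trivial, so that $\sF$ is $\hbar$-polystable (stable in the irreducible case) with numerically trivial determinant. Bando--Siu then furnishes an admissible Hermitian--Einstein metric $h$ on $\sF$ over $X_{\reg}$, and numerical triviality of $\det\sF$ forces the Einstein factor to vanish. The remaining vanishing condition on $\widehat\ch_2$, read off a general complete intersection surface $S=H_1\cap\cdots\cap H_{n-2}$ carrying only isolated quotient singularities, places $(\sF,h)$ exactly in the equality case of the orbifold Bogomolov--Gieseker inequality; the equality case forces the curvature $F_h$ to vanish, so $h$ is flat. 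A flat unitary metric on the locally free locus of $\sF|_{X_{\reg}}$ is precisely the monodromy datum of a unitary representation of the fundamental group of that locus; since the non-locally-free locus of a reflexive sheaf has codimension at least three in the smooth $X_{\reg}$, the flat bundle extends to a locally free flat bundle on all of $X_{\reg}$ agreeing with $\sF|_{X_{\reg}}$, and its monodromy is a representation of $\pi_1(X_{\reg})$ itself. Complete reducibility of the representation corresponds to polystability and irreducibility to stability, completing this direction.

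The ``only if'' direction is formal once the $\bQ$-Chern bookkeeping is in place. A unitary representation $\rho\colon\pi_1(X_{\reg})\to U(r)$ produces a flat Hermitian bundle $E_\rho$ on $X_{\reg}$, and $\sF:=(j_*E_\rho)^{\vee\vee}$, for $j\colon X_{\reg}\hookrightarrow X$, is reflexive with $\sF|_{X_{\reg}}\cong E_\rho$. Flatness makes all Chern forms of $E_\rho$ vanish, and pulling back through the quotient-singularity orbifold charts in codimension two shows that the classes $\widehat\ch_1(\sF)$ and $\widehat\ch_2(\sF)$ vanish, hence vanish against every polarization, giving \iref{assume:2}. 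Moreover a flat unitary bundle is a direct sum of irreducible flat, hence stable, degree-zero summands, so a Chern--Weil computation valid for \emph{every} choice of $\hbar$ yields polystability, and stability in the irreducible case, establishing \iref{assume:1} and the ``for all polarizations'' clause.

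The main obstacle is the analytic-to-algebraic reconciliation along $\Sing X$ and the non-locally-free locus of $\sF$: one must control the admissible metric and its curvature near these sets well enough that (a) the \emph{algebraically} defined $\bQ$-Chern numbers genuinely detect the equality case of Bogomolov--Gieseker, and (b) the flat structure descends through the codimension-two quotient-singularity charts to a representation of the honest group $\pi_1(X_{\reg})$ rather than of some smaller open subset. The codimension-two quotient-singularity hypothesis is exactly what reconciles these two pictures, since it both defines the intersection pairing used throughout and guarantees the orbifold charts needed for the descent. The two ``in particular'' assertions then follow for free: $\sF|_{X_{\reg}}$ is the locally free bundle $E_\rho$, and since every torsion-free quotient of a flat unitary bundle again carries numerically trivial determinant, $\sF$ is generically semi-positive.
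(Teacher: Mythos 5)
Your overall architecture (reduce to a surface by restriction theorems, produce a Hermitian--Einstein metric, use the equality case of Bogomolov--Gieseker to force flatness, then transport the flat structure back to $X_{\reg}$) is the right shape, and your ``only if'' direction is essentially fine. But the central analytic step has a genuine gap. You invoke Bando--Siu to produce an admissible Hermitian--Einstein metric ``on $\sF$ over $X_{\reg}$''. Bando--Siu is a theorem about reflexive sheaves on \emph{compact K\"ahler manifolds}; $X_{\reg}$ is a noncompact quasi-projective manifold and $X$ itself is neither smooth nor a global orbifold (it is only an orbifold in codimension two). So the existence of such a metric, with curvature controlled near $\Sing X$ and near the non-locally-free locus, is precisely what would have to be proved. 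The same problem recurs at the next step: to conclude $F_h\equiv 0$ from $\widehat\ch_2(\sF)\cdot H_1\cdots H_{n-2}=0$ you need the Chern--Weil integral of $h$ over the noncompact $X_{\reg}$ (or over the singular surface $S$) to converge and to equal the algebraically defined $\bQ$-Chern number; checking the Bogomolov--Gieseker \emph{equality} on $S$ alone only controls $F_h|_S$. You correctly name this ``analytic-to-algebraic reconciliation'' as the main obstacle, but asserting that the codimension-two quotient-singularity hypothesis ``is exactly what reconciles these two pictures'' is not an argument; it is the content of the theorem.

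The paper is structured specifically to avoid ever solving a Hermitian--Einstein equation on a singular or open space. After restricting to a general complete intersection surface $S$ (where stability is preserved by Mehta--Ramanathan/Flenner/Langer), it passes to Mumford's global quasi-\'etale cover $\hat S\to S$ with Galois group $G$ and then to a $G$-equivariant resolution $\pi\colon\wtilde S\to\hat S$; Proposition~\ref{lift} perturbs $\pi^*\hat H$ by an exceptional divisor to a genuine $G$-invariant ample class for which $\pi^*\hat\sE$ remains $G$-stable, and the \emph{classical} equivariant Donaldson--Uhlenbeck--Yau theorem on the smooth projective surface $\wtilde S$ then yields a $G$-invariant flat connection via the Riemann bilinear identity (\ref{bilinear}). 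The flat structure descends to $S_{\reg}$ minus finitely many points, and is transported back to $X_{\reg}$ not by working on $X_{\reg}$ directly but via the Hamm--L\^e Lefschetz isomorphism $\pi_1(S_{\reg})\cong\pi_1(X_{\reg})$ together with the boundedness argument and the Enriques--Severi lemma (Proposition~\ref{ES}), which identifies $\sF$ with the flat sheaf so constructed. If you want to salvage your route, you would need either a proof of an orbifold Bando--Siu/DUY theorem on the compact orbifold surface $S$ together with the Chern--Weil identification for the orbifold $\bQ$-Chern numbers, or you should adopt the cover-and-resolve mechanism of Proposition~\ref{lift}; as written, the key existence and integration steps are unsupported. (Two smaller points: in the polystable case you must also check that \emph{each} stable summand separately has vanishing $\widehat\ch_2$ --- the paper does this with Hodge index plus Bogomolov --- and in the ``only if'' direction the vanishing of $\widehat\ch_2(\sF)$ requires extending the flat bundle across the orbifold charts before computing, not merely pulling back forms from $X_{\reg}$.)
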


Our main theorem below gives a characterization for finite quotients of Abelian varieties that are \'etale in codimension-$1$.

We recall that a \emph{klt space} is a normal $\mathbb{Q}$-Gorenstein space $X$ with at worst klt 
singularities (\cite{KM98}), i.e.\ $X$ admits a desingularization $\pi:\wtilde X \to X$ that 
satisfies  $a_i>-1$ for every $i$ in the ramification formula (we use $\simeq_{_{\Q}}$ to denote $\mathbb{Q}$-linear equivalence)
\begin{equation}\label{sings}
  K_{\wtilde X}\simeq_{_{\Q}}\pi^*(K_X)+\sum a_iE_i, 
     \end{equation}
where the $E_i$'s are the prime components of the exceptional divisor and $K_X$ is the canonical divisor of $X$ (up to linear equivalence) given by a $\bQ$-Cartier divisor defined by the sheaf isomorphism $\omega_X\cong \sO_X(K_X)$ with $\omega_X$ the dualizing sheaf of $X$. And we say that $X$ has at worst \emph{canonical} singularities, if we replace the inequality $a_i>-1$ above
by $a_i\geq 0$. We also recall that klt spaces have only quotient singularities in codimension-$2$ and that varieties with only quotient singularities are klt, see Remark~\ref{klt-orbifold}.\\[-3mm]

\begin{thm}%[Characterization of Finite Quotients of Abelian Varieties]
\label{main} Let $X$ be an $n$-dimensional 
normal projective variety. Then $X$ is a quotient 
of an Abelian variety by a finite action free in codimension-$1$ if and only if $X$ is klt and 

\begin{enumerate}\label{conditions:1}
\item\label{first} $K_X\equiv 0$.\\[-3mm]
\item\label{second} The second $\bQ$-Chern class of  $\sT_X:=(\Omega^1_{ X})^*$ respects the vanishing condition 
\begin{equation*}
\widehat \chern_2(\sT_X)\cdot A_1\cdot \ldots \cdot A_{n-2}=0,
\end{equation*}
for some $(n-2)$-tuple of ample divisors $(A_1,\ldots, A_{n-2})$.%\\[-3mm]
\end{enumerate}

\end{thm}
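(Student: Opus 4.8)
The plan is to establish the two implications separately, with the reverse (sufficiency) direction carrying essentially all of the content.

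For the forward direction, suppose $X\cong A/G$ with $A$ Abelian and $G$ finite acting freely in codimension one. I would check the three properties directly: $X$ is a global quotient, hence has quotient singularities and is therefore klt by Remark~\ref{klt-orbifold}; since the quasi-\'etale map $\pi:A\to X$ carries no ramification divisor, $\pi^{[*]}K_X\simeq_{_{\Q}}K_A=0$, forcing $K_X\equiv 0$; and $\pi^{[*]}\Omega^1_X\cong\Omega^1_A$ is trivial, so every orbifold Chern class of $\sT_X$ pulls back to zero and, $\pi$ being finite, $\widehat\chern_2(\sT_X)\cdot A_1\cdots A_{n-2}=0$ for all ample $A_i$.

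For the converse, assume $X$ is klt with $K_X\equiv 0$ and the stated $\widehat\chern_2$-vanishing. First I would show $\sT_X$ is semistable with respect to every polarization: since $K_X\equiv 0$ is pseudo-effective, $X$ is not uniruled, so Miyaoka's generic semi-positivity theorem applies to $\Omega^1_X$; as $\det\Omega^1_X=\sO_X(K_X)$ is numerically trivial, the discussion around Lemma~\ref{triviality:2} identifies generic semi-positivity with semistability of $\Omega^1_X$, hence of $\sT_X$, for every $\hbar=(H_1,\dots,H_{n-1})$. Next I would upgrade this to the hypotheses of Theorem~\ref{NS-result}. One has $\widehat\ch_1(\sT_X)=-K_X\equiv 0$ and, since $\widehat\chern_1\equiv 0$, $\widehat\ch_2(\sT_X)=-\widehat\chern_2(\sT_X)$, so both Chern-character vanishings in condition~(\ref{assume:2}) hold. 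Along a Jordan--H\"older filtration the stable factors all have slope zero, and the Bogomolov--Gieseker inequality on a general surface section forces $\widehat\ch_2\le 0$ on each factor; as these contributions sum to zero, each factor has vanishing second Chern character, so by Theorem~\ref{NS-result} each is a flat unitary bundle and $\sT_X$ is polystable. Applying Theorem~\ref{NS-result} to $\sT_X$ then shows $\sT_X|_{X_{\reg}}$ comes from a unitary representation of $\pi_1(X_{\reg})$ and, by the extension statement recorded in the abstract, becomes locally free and flat on a finite Galois cover $\gamma:\wtilde X\to X$ \'etale over $X_{\reg}$. Since $\gamma$ is \'etale in codimension one, this sheaf agrees with $\sT_{\wtilde X}$ over $\wtilde X_{\reg}$, so the reflexive sheaf $\sT_{\wtilde X}$ is locally free everywhere; the Lipman--Zariski conjecture for klt spaces then yields that $\wtilde X$ is smooth.

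It remains to identify the smooth projective variety $\wtilde X$, which satisfies $K_{\wtilde X}\equiv 0$ and carries a flat unitary (in particular numerically flat) tangent bundle. I would finish by invoking the uniformization of such manifolds: Yau's theorem supplies a Ricci-flat K\"ahler metric, whose full curvature must vanish once $\widehat\chern_2\cdot[\omega]^{n-2}=0$, so $\wtilde X$ carries a flat K\"ahler metric and is, by Bieberbach's theorem, a finite \'etale quotient of a torus; being projective, it is a finite \'etale quotient of an Abelian variety. Composing with $\gamma$ and passing to a Galois closure---which remains Abelian because, by purity, connected quasi-\'etale covers of an Abelian variety are isogenies---exhibits $X$ as the quotient of an Abelian variety by a finite group acting freely in codimension one. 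The hard part will be this last passage: reconciling the flat unitary structure produced by Theorem~\ref{NS-result} with the K\"ahler geometry of $\wtilde X$, i.e.\ arranging that the connection whose flatness we control is the Levi-Civita connection so that bundle flatness becomes manifold flatness, together with the input that local freeness of $\sT_{\wtilde X}$ genuinely forces smoothness.
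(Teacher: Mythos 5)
Your architecture matches the paper's (the paper in fact gives two proofs, one via polystability and one via semistability, and your proposal is essentially a hybrid of them), but there is one genuine gap at the pivot of the argument. You claim that because the Jordan--H\"older factors of the $\hbar$-semistable sheaf $\sT_X$ are unitary flat, $\sT_X$ is polystable. That inference is false: an iterated extension of unitarily flat bundles need not split (already on an elliptic curve the nontrivial extension of $\sO$ by $\sO$ is semistable with unitary flat graded pieces but is not polystable, and is flat only for a non-unitary representation). Knowing the graded pieces are flat tells you nothing about the vanishing of the extension classes. The paper circumvents this in two ways: in the first proof it invokes the deep result \cite[Theorem~3]{GKP12} that the tangent sheaf of a canonical variety with trivial canonical class becomes polystable after a quasi-\'etale cover (this uses the index-one cover $Z\to X$ from Corollary~\ref{SirSimon} first); in the second proof it abandons polystability altogether and shows, via Simpson's correspondence \cite[Cor.~3.10]{Sim92}, that an iterated extension of unitarily flat bundles carries a unique (hence $G$-invariant, but in general non-unitary) flat connection --- this is the content of Theorem~\ref{finite-resolution}, and flatness alone suffices for the rest of your argument. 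So your proof is repaired by replacing ``polystable $+$ Theorem~\ref{NS-result}'' with a citation of Theorem~\ref{finite-resolution} (or of \cite{GKP12}), but as written the step does not stand.

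Two smaller points. First, your assertion that ``$K_X\equiv 0$ is pseudo-effective, hence $X$ is not uniruled'' is the statement for smooth (or canonical) varieties; for a klt $X$ the discrepancies may be negative, so the paper's Proposition~\ref{SS-KLT} first passes to the simultaneous index-one cover $Z\to X$, on which $K_Z$ is Cartier and the singularities are canonical, and only then runs the resolution argument and descends generic semi-positivity via Lemma~\ref{gsp}. Second, your closing step (Yau's Ricci-flat metric, flatness from the Riemann bilinear identity, Bieberbach, and the Galois closure of the composite cover, which stays Abelian by purity) is exactly the paper's, and your explicit treatment of the Galois closure is if anything slightly more careful than the text.
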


\begin{rem}\label{AR} We remark that the two conditions (\ref{main}.\ref{first}) and (\ref{main}.\ref{second}) may be replaced by the following set of assumptions for some and hence for all polarization $\hbar=(H_1,\ldots,H_{n-1})$, with each $H_i$ ample (see Explanation~\ref{explaining}): \\[-3mm]

\begin{enumerate}\label{conditions:2}
\item\label{first0} The tangent sheaf $\sT_X$ is semistable with respect to $\hbar$ (automatically satisfied if the singularities of $X$ are canonical and Condition~(\ref{AR}.\ref{second0}) is satisfied).\\[-2mm]

\item \label{second0} The first and second $Q$-Chern characters of $\sT_X$ verify the vanishing conditions 
\begin{flalign*}
& \widehat \ch_1(\sT_X) \cdot H_1\cdot \ldots \cdot H_{n-1}=0,\\
&\widehat \ch_2(\sT_X)\cdot H_1\cdot \ldots \cdot H_{n-2}=0.\\[-4mm]
\end{flalign*}
\end{enumerate}
\end{rem}

Theorem~\ref{main} is a generalization of the classical uniformization theorem of S.T. Yau which states that a compact K\"ahler manifold $X$ of dimension $n$ with  K\"ahler class $[w]$ that satisfies $\chern_1(X)_{\bR}=0$ and $\int_X \chern_2(X)\wedge [w]^{n-2}=0$ is uniformized by $\mathbb{C}^n$ -- a consequence of Yau's solution to Calabi's conjecture~\cite{Yau78}, showing that a compact K\"ahler manifold with vanishing real first Chern class admits a Ricci-flat metric. The problem of extending this result to the singular setting  was proposed in a paper of Shepherd-Barron and Wilson~\cite{SBW94}. There, they show that  threefolds with  at most canonical singularities with numerically trivial first and second $\mathbb{Q}$-Chern classes are finite quotient of Abelian threefolds 
(unramified in codimension-$1$). Our basic strategies 
%in section~\ref{KH} 
follow those of \cite{SBW94} and~\cite{GKP14a}. We take the natural classical approaches whenever possible. Theorem~\ref{main} for terminal varieties ($a_i>0$ in (\ref{sings})) and more generally for klt varieties that are smooth in codimension-$2$ have been established by Greb, Kebekus and Peternell~\cite{GKP14a} and sets the stage for this paper. We give two different proofs of the above theorem, the first via a result on the polystability of the tangent sheaf %(\cite[Theorem 3]{GKP12}) 
and the second 
via Miyaoka's theorem on the generic semipositivity of the cotangent sheaf of a non-uniruled variety by working out the following orbifold generalization of %the Donaldson-Uhlenbeck-Yau-
Simpson's correspondence between 
coherent sheaves with flat connections and semistable bundles  with zero first and second $\mathbb Q$-Chern characters. Both of these generalize to the case of other classical quotients, such as orbifold quotients of the ball, that we will treat elsewhere.

\begin{thm}%[Desingularization of Semistable Reflexive Sheaves Up to a Finite Cover]
\label{finite-resolution} Let $X$ be a klt projective variety, 
$\hbar:=(H_1,\ldots,H_{n-1})$ a polarization on $X$ and $\sE$
a coherent reflexive sheaf on $X$. Then $\sE|_{X_{\reg}}$ is locally free and flat, i.e. given by a representation of $\pi_1(X_{\reg})$, if $\sE$ is semistable with respect to $\hbar$ and verifies 
\begin{flalign*}
& \widehat \ch_1(\sE)\cdot H_1\cdot \ldots \cdot H_{n-1}=0,\\
 & \widehat \ch_2(\sE)\cdot H_1\cdot \ldots \cdot H_{n-2}=0.
\end{flalign*}
Furthermore, there exists a finite, Galois morphism $f:Y\to X$ \'etale over $X_{\reg}$, independent of $\sE$, such that $(f^*\sE)^{**}$ is locally-free, equivariantly flat and with numerically trivial determinant over $Y$.

\end{thm}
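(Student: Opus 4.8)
The plan is to read the biconditional as an orbifold upgrade of Simpson's correspondence and to use Theorem~\ref{NS-result} as the engine, reducing the general semistable case to the polystable one by a Jordan--H\"older argument. First I would establish the substantive implication: assume $\sE$ is $\hbar$-semistable with $\widehat\ch_1(\sE)\cdot H_1\cdots H_{n-1}=0$ and $\widehat\ch_2(\sE)\cdot H_1\cdots H_{n-2}=0$. The first vanishing says that $\mu_\hbar(\sE)=0$, so I would pass to a Jordan--H\"older filtration $0=\sE_0\subset\cdots\subset\sE_k=\sE$ with stable quotients $\sG_i$ of slope zero. The key numerical step is to show that each factor still satisfies $\widehat\ch_2(\sG_i)\cdot H_1\cdots H_{n-2}=0$. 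For this I would invoke the Bogomolov inequality (valid on the orbifold surface cut out by general members of $\hbar$) for each semistable $\sG_i$, combined with the Hodge index theorem applied to $\widehat\ch_1(\sG_i)$, which has zero slope; together these force $\widehat\ch_2(\sG_i)\cdot H_1\cdots H_{n-2}\le 0$ for every $i$. Since the $\bQ$-Chern character is additive in short exact sequences, $\sum_i \widehat\ch_2(\sG_i)\cdot H_1\cdots H_{n-2}=\widehat\ch_2(\sE)\cdot H_1\cdots H_{n-2}=0$, and each nonpositive summand must then vanish; the equality case of Hodge index simultaneously forces $\widehat\ch_1(\sG_i)$ to be numerically trivial.

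Each $\sG_i$ is thus stable with vanishing first and second $\bQ$-Chern characters, so Theorem~\ref{NS-result} applies and $\sG_i|_{X_{\reg}}$ is locally free and arises from an irreducible unitary representation of $\pi_1(X_{\reg})$. I would then reassemble $\sE$: extensions of locally free sheaves are locally free on $X_{\reg}$, so $\sE|_{X_{\reg}}$ is a vector bundle whose associated graded $\bigoplus_i \sG_i|_{X_{\reg}}$ is a polystable, unitary flat bundle. To promote this to flatness of $\sE$ itself, I would use that unitary local systems are stable under extension inside the category of bundles admitting a flat connection: the vanishing of $\widehat\ch_2(\sE)$ forces the relevant extension classes to be flat, equivalently the tame harmonic-bundle correspondence with vanishing Higgs field produces a flat connection on $\sE|_{X_{\reg}}$ whose $(0,1)$-part is the given holomorphic structure. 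Hence $\sE|_{X_{\reg}}$ is itself a local system.

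Conversely, if $\sE|_{X_{\reg}}$ is locally free and flat, I would read off the vanishing conditions from Chern--Weil theory: a flat connection has zero curvature, so the orbifold Chern forms computed on the cutting-down surface vanish, yielding $\widehat\ch_1(\sE)\cdot H_1\cdots H_{n-1}=0$ and $\widehat\ch_2(\sE)\cdot H_1\cdots H_{n-2}=0$, while semistability follows from the fact that such a bundle is an iterated extension of the slope-zero unitary local systems attached to its composition factors. Since this direction is formal, I would keep it brief. For the klt statement I would construct $f\colon Y\to X$ by trivializing the local monodromy of these local systems around the singular set: the representation $\rho\colon\pi_1(X_{\reg})\to \GL_r(\bC)$ has finite local monodromy at each (quotient) singularity, since klt local fundamental groups are finite, so the local systems extend across the codimension-$\ge 2$ singular locus after pulling back along a cover killing these monodromies. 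The decisive point is that this cover can be chosen \emph{uniformly in $\sE$}, since the finite quotient of $\pi_1(X_{\reg})$ through which all local monodromies factor depends only on the singularities of $X$; this gives a single finite Galois $f\colon Y\to X$, \'etale over $X_{\reg}$, working for every $\sE$. On $Y$ the pulled-back local system has trivial local monodromy, so its reflexive hull $(f^*\sE)^{**}$ is locally free and equivariantly flat, and its determinant, being flat, has numerically trivial first Chern class.

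I expect the main obstacle to be exactly this uniform construction: proving that one finite, Galois, codimension-one-\'etale cover simultaneously resolves the extension problem for all such sheaves requires controlling $\pi_1(X_{\reg})$ through the local structure of klt singularities, i.e.\ the finiteness of their local fundamental groups together with the existence of a maximal such cover. A secondary delicate point, subsumed into the second paragraph above, is ensuring that the flat structure descends from the associated graded to $\sE$ itself rather than merely to $\bigoplus_i \sG_i$; this is where the vanishing of the second $\bQ$-Chern character, rather than just slope-zero semistability, is essential.
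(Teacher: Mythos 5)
Your overall architecture --- Jordan--H\"older filtration, Bogomolov plus Hodge index plus additivity of $\widehat\ch_2$ to force each stable factor to satisfy the hypotheses of Theorem~\ref{NS-result}, then reassembly --- is the same as the paper's, and the numerical part of your argument is essentially the paper's Case~1 (modulo one omission: the Jordan--H\"older quotients $\sG_i$ are a priori only torsion-free, and you need Megyesi's comparison $\widehat\chern_2(\sG_i^{**})\le\widehat\chern_2(\sG_i)$, with equality if and only if $\sG_i=\sG_i^{**}$, both to run the inequality chain for reflexive sheaves and to conclude that the factors, hence the terms of the filtration, are actually locally free). The genuine gap is in your second paragraph, at the step you yourself flag as delicate: promoting flatness from $\bigoplus_i\sG_i$ to $\sE$. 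You appeal to ``the tame harmonic-bundle correspondence with vanishing Higgs field'' on $X_{\reg}$, i.e.\ to a Simpson-type statement on a non-compact (equivalently, orbifold) space. That statement is not available off the shelf --- the paper explicitly remarks that the orbifold generalization of Simpson's correspondence was not previously known even for surfaces, and it is in effect part of what Theorem~\ref{finite-resolution} is proving --- so invoking it here is circular.

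The paper closes this gap by reducing to the compact smooth case: it pulls $\sE$ back to the quasi-\'etale Galois cover $\hat S\to S$ of Theorem~\ref{alg-pi} (working on a general complete-intersection surface $S$, the higher-dimensional case being recovered by the Enriques--Severi induction of Proposition~\ref{ES}), takes a $G$-Jordan--H\"older filtration there, and lifts everything to a $G$-equivariant resolution $\wtilde S\to\hat S$, where Proposition~\ref{lift} supplies a $G$-invariant polarization keeping the graded pieces $G$-stable. On the smooth projective surface $\wtilde S$ Simpson's Corollary~3.10 applies verbatim and equips the iterated extension of unitarily flat bundles with a flat connection; the uniqueness of that connection makes it $G$-invariant, so it descends to $\hat S_{\reg}$ and then to $S_{\reg}$. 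Your argument has no substitute for this descent mechanism. (Your treatment of the final klt statement is fine in spirit --- you correctly identify that everything hinges on a single cover killing the local monodromy contributions, which is exactly the content of Theorem~\ref{alg-pi} via Xu's finiteness theorem together with the extension result of Proposition~\ref{extension} --- but in the paper that is quoted machinery, not something to re-prove.)
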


It goes without saying that Yau's resolution of the Calabi conjecture and the Donaldson-Uhlenbeck-Yau theorem are basic ingredients in our proof of Theorem~\ref{main}. 
Key to our proof also include the  resolution of the Zariski-Lipman conjecture for klt spaces due to Greb, Kebekus, Kov\'acs and Peternel~\cite[Thm.~6.1]{GKKP11} and the  result in~\cite[Thm.~1.13]{GKP14a} of Greb, Kebekus and Peternell regarding the extension of flat vector bundles across the singular locus of a klt variety up to a finite cover obtained via Chenyang Xu's result on the finiteness of the local algebraic fundamental groups of klt varieties \cite{Xu12}. These latter recent 
results are used only to show that the singularities arising in 
Theorem~\ref{main} are quotient singularities and are not used
for Theorem~\ref{NS-result} and the first part of Theorem~\ref{finite-resolution}, which are results of independent interests even in dimension two. Compared to past approaches, crucial new  ingredients include the use, for a finite group $G$ acting on a normal projective variety $X$, of the $G$-equivariant version of the Donaldson-Uhlenbeck-Yau correspondance coupled with an equivariant resolution of $X$, of
Langer's  restriction theorems for (semi)stable sheaves, of Jordan-H\"older filtrations for $G$-semistable sheaves by $G$-equivariant subsheaves.

%Indeed, corresponding local results to these latter as that given in \cite{SBW94} in dimension three would have been sufficient and this paper strives to show that our results otherwise follow from very classical methods.\\[-3mm]

\begin{rem}[Comparing $\bQ$-Chern classes of $X$ to Chern classes of smooth models of $X$] Let $X$ be a minimal, projective klt variety and $\pi:\wtilde X\to X$ a resolution of $X$. The Chern class calculations in~\cite[Prop.~1.1]{SBW94}, together with the fact that the exceptional locus of a resolution of surface klt singularities is given by a tree of $\bP^1$'s, show that the inequality 
$$\widehat \chern_2(\sT_X)\cdot H^{n-2}\leq \chern_2(\sT_{\wtilde X})\cdot \pi^*H^{n-2}$$ 
holds for all ample divisor $H$ on $X$ and that the inequality is strict as long as $X$ is \emph{not} smooth in codimension-two. In particular if 
\begin{equation}\label{eq:compare}
K_X\equiv 0  \  \  \   ,  \ \ \  \chern_2(\sT_{\wtilde X})\cdot \pi^*H^{n-2}=0,
\end{equation}
then from Theorem~\ref{main} we find that $X$ is a finite quotient of an Abelian variety (notice that, by Proposition~\ref{SS-KLT}, $\sT_X$ is semistable in this case so that the Bogomolov inequality (\ref{kaw-bog}) gives  
$\widehat \chern_2(\sT_X)\cdot H^{n-2}\geq 0$). This uniformization result via the Chern class conditions 
(\ref{eq:compare}) in the case $X$ has at most canonical singularities has been established in~\cite[Thm.~1.4]{GKP14b} using a stability theory for sheaves with respect to movable classes. 

\end{rem}

\subsection{Acknowledgements}
 
The authors owe a debt of gratitude to Stefan Kebekus for fruitful conversations leading to a strengthening of results in the first draft of this paper and for his kind invitation of and hospitality during the first author's short visit to Freiburg where these took place. We also thank Chenyang Xu for an answer to a pertinent question related to the paper. A special thanks is owed to the anonymous referee for pointing out 
some mistakes in an earlier draft of this paper and for the helpful suggestions with regards to 
the presentation. 
%The first author acknowledges NSERC and TIMS for their support during the write-up of this paper.

\section{Preliminaries on $Q$-structures and $Q$-Chern classes}\label{prelim}
In this section, we first give a very brief overview of the theory of $\bQ$-vector bundles on an orbifold $X$ (or Satake's $V$-bundles on a
$V$-manifold~\cite{Sat56}). Then we provide, via the Bogomolov inequality for the $\bQ$-Chern classes of semistable $\bQ$-vector bundles, a numerical  criterion for $\bQ$-Chern classes of generically semi-positive reflexive sheaves  to vanish (Section~\ref{criterion:triv}). Finally we collect some basic facts on the behaviour of reflexive sheaves under a natural class of finite surjective maps between normal varieties. 

\subsection{Reflexive operations} 

In this paper, all objects are defined over $\bC$, all coherent sheaf on an algebraic (or analytic) variety $X$ are $\sO_X$-modules as well as  all torsion-free or reflexive sheaves on $X$. We denote the reflexive hull of a coherent sheaf $\sF$ of rank $r$ by $\sF^{**}$ and define the reflexive exterior power by $\wedge^{[i]}\sF:=(\wedge^{i}\sF)^{**}$. In particular, $\det \sF$ is the reflexive hull $(\wedge^r\sF)^{**}$ of $\wedge^r\sF$. For a morphism $f:Y\to X$, the reflexive pull-back of a coherent sheaf $\sF$ on $X$ is denoted by $f^{[*]}\sF:=(f^*\sF)^{**}$. A useful fact about a reflexive sheave $\sE$ on a normal
variety $X$ is that it is locally free on an open subsets $X_0$ of $X_{\reg}$ with codimension $\geq 2$ complement in $X$ and that, for any such open subset, $\sE=i_*(\sE|_{X_0})$ where 
$i: X_0\into X$ is the inclusion. In particular, 
reflexive pullbacks behave well under composition of finite morphisms between normal varieties.
For an in-depth discussion we invite the reader to consult Hartshorne~\cite{Har80} and~\cite{GKKP11}.

\subsection{Local constructions}\label{Q-stuff}
For a reflexive sheaf over a complex analytic variety $X$ with only quotient singularities in codimension-$2$, we define the $\bQ$-Chern classes via metric Chern-forms analytically locally.  We then define the $\bQ$-Chern classes for an algebraic reflexive sheaf as those of its analytification.  We also discuss conditions that guarantee their vanishing considered as multilinear
forms on the N\'eron-Severi space when $X$ is projective.\\[-5mm]

\subsubsection{$\bQ$-vector bundles and $\bQ$-Chern classes} Let $X$ be a complex analytic variety \emph{with only quotient singularities}, i.e. locally analytically $X$ is a quotient of complex manifold by a finite group, acting freely in codimension-$1$. 
Let $\{U_{\alpha}\}$ be a finite cover of $X$ with local uniformizations, that is, for each $\alpha$ there exists a complex manifold $X_{\alpha}$ and a finite, proper, holomorphic map $p_{\alpha}:X_{\alpha}\to U_{\alpha}$ such that $U_{\alpha}=X_{\alpha} / G_{\alpha}$, where $G_{\alpha}=\Gal(X_{\alpha}/U_{\alpha})$. We call the $G_{\alpha}$-equivariant coherent sheaves on $X_{\alpha}$ $G_{\alpha}$-sheaves and their $G_{\alpha}$-equivariant subsheaves 
the $G_{\alpha}$-subsheaves  (see \cite{Mum83} or \cite{GKKP11}).
We call a coherent sheaf $\sE$ on $X$ a \emph{$\bQ$-vector bundle}, if for each $\alpha$, there exists a $G_{\alpha}$-locally-free sheaf $\sE_{\alpha}$ on $X_{\alpha}$ such that its sheaf of $G_{\alpha}$-invariants $\sE_{\alpha}^{G_{\alpha}}$ descends to $\sE|_{U_{\alpha}}$, i.e. $\sE_{\alpha}^{G_{\alpha}}=\sE|_{U_{\alpha}}$.\\[-4mm]

Now, let $h_{\alpha}$ be a collection of hermitian $G_{\alpha}$-invariant metrics on $\sE_{\alpha}$ verifying the natural compatibility conditions on overlaps. Such a collection exists by a partition of unity argument (subordinate to $\{U_{\alpha}\}$) since the $X_\alpha$'s are locally isomorphic over the $U_{\alpha}$ overlaps. Denote the $i$-th Chern form of $h_\alpha$ by $\Theta_{\alpha,i}(\sE,h_{\alpha})$. These forms are  $G_{\alpha}$-invariant by construction and naturally gives rise to $\bQ$-Chern forms $\Theta_i$ of $\sE$ (over $X_{\reg}$)  defined by the local data $p_{\alpha}^*(\Theta_i|_{U_{\alpha}})=\Theta_{\alpha,i}(\sE,h_{\alpha})$. These define natural cohomology classes $\widehat \chern_i(\sE):=[\Theta_i(\sE,h_{\alpha})]\in H^{2i}(X,\bQ)$ independent of the choice of the metrics $h_{\alpha}$ and are called the \emph{$\bQ$-Chern classes} (or orbifold-Chern classes) of the $\bQ$-bundle $\sE$, noticing that $V$-manifolds satisfy Poincar\'e duality with coefficients $\bQ$, cf.~\cite[Thm.~3]{Sat56}.
We refer to the original paper of Satake~\cite{Sat56} (see also~\cite[Sect.~2]{Kaw92})
for an in-depth discussion of these notions.  \\[-5mm]

\subsubsection{Reflexive sheaves as $\bQ$-vector bundles} A reflexive sheaf $\sE$ on a complex analytic normal surface with at worst quotient singularities has a natural $\bQ$-vector bundle structure defined by the locally free sheaves $\sE_{\alpha}:=p_{\alpha}^{[*]}(\sE|_{U_{\alpha}})$. More generally, let $X$ be any normal complex analytic variety with \emph{only quotient singularities in codimension-$2$}, that is, the maximal subvariety $X_1$ of $X$ with an orbifold structure, defined by removing the non-orbifold locus from $X$ has 
$\codim_X (X\setminus X_1)\geq 3$. Then
any reflexive sheaf on $X$ has a $\bQ$-vector bundle structure on an open subset $X_2\into X_1$ with
codimension-$3$ complement. In particular, we may define $\bQ$-Chern classes of a reflexive sheaf by restricting to $X_2$.\\[-4mm] 

Assume further that $X$ is projective. We define the 
$\bQ$-Chern classes of an algebraic reflexive sheaf $\sE$ restricted to $X_2$ to be those of the analytification $\sE^{\an}$ of $\sE$. Since $\codim_X (X\setminus X_2)\geq 3$, there are well defined intersection pairings between $\widehat \chern_1(\sE)$ and $(n-1)$-tuples of ample divisors $(H_1,\ldots,H_{n-1})$ and between
any linear combination $\Delta:=a\cdot \widehat \chern_2(\sE)+b\cdot \widehat \chern_1^{\,2}(\sE)$ and the $(n-2)$-tuples $(H_1,\ldots,H_{n-2})$, the latter pairing via 
\begin{equation}\label{lin-comb}
\Delta\cdot H_1\cdot\ldots\cdot H_{n-2}:=\Delta|_{X_2}\cdot H_1\cdot\ldots \cdot H_{n-2}.
\end{equation}

\begin{rem}\label{klt-orbifold} We recall that klt spaces have only quotient singularities in codimension-$2$, 
cf.~\cite[Prop.~9.4]{GKKP11}. Therefore our discussion above is valid for any reflexive sheaf over a projective klt variety. Also varieties with only quotient singularities are klt  from the fact that a finite morphism $f:Y\to X$ \'etale in codimension-$1$ between normal varieties preserve the klt condition by \cite[Prop. 5.20]{KM98}. In particular, if $X$ as given in this previous sentence has only quotient singularities in codimension-$1$, then so does $Y$ (since $X_1$ is klt in this case).
\end{rem}

\begin{rem}\label{Q-chern-S}%[Evaluating $\bQ$-Chern Classes on Complete Intersection Surfaces] 
Let $X$ and $\sE$ be as before. Define $S:=D_1\cap \cdots \cap D_{n-2}$ to be the orbifold projective surface in $X_2$ cut out by general members $D_i$ of basepoint-free linear system $|m\cdot H_i|$ for $m$ being a sufficiently large positive integer. 
We recall that $S$ may be chosen such that $\sE|_S$ is reflexive, see~\cite[Cor.~1.1.15]{HL10} for the smooth case and~\cite[Thm.~12.2.1]{Gro65} for the general setting. 
Therefore $\sE|_S$ has a natural $\bQ$-vector bundle structure (one can also see this by simply restricting the $\bQ$-vector bundle structure that is already enjoyed by $\sE|_{X_2}$ to the general surface $S\into X_2$). We may interpret the intersection number in  (\ref{lin-comb}) as the rational number $(1/{m^{n-1}})\cdot \Delta(\sE|_S)$ (we note that this number is obtained by integrating over the fundamental class of $S$ and is independent of the choice of $S$ for fixed  $H_1,\dots H_{n-2}$ and $m$. Thus, we may and will understand $\widehat \chern_2(\sE)$ (and similarly $\widehat \chern^{\,2}_1(\sE)$), following \cite{SBW94}, as multilinear forms on the N\'eron-Severi space $\mathrm{NS}(X)_{\bQ}$ (see below).

\end{rem}

\subsubsection{Numerical triviality of $\bQ$-Chern classes on the Picard group.}

\begin{defn}[Numerical Triviality of $\bQ$-Chern Classes of Reflexive Sheaves]\label{triviality} Let $X$ be a normal projective variety with only quotient singularities in codimension-$2$. For a reflexive sheaf $\sE$ on $X$, we say that the first and second $\bQ$-Chern classes of $\sE$ are numerically trivial on the Picard group (or simply trivial on $X$), and we write $\widehat \chern_i(\sE)\equiv_{_{ns}} 0$, $i=1,2$, if $\widehat \chern_i(\sE)$ defines a vanishing multilinear form on $\mathrm{NS}(X)_{\bQ}$. Since the $\bR$-span of the ample classes is open in 
$\mathrm{NS}(X)_{\bR}$, we have
\begin{flalign*}\label{triv}
& \widehat \chern_1(\sE)\equiv_{_{ns}} 0 \iff \widehat \chern_1(\sE)\cdot H_1\cdot \ldots\cdot H_{n-1}=0\ \ \forall \ (H_1,\ldots,H_{n-1}),\\
& \widehat \chern_2(\sE)\equiv_{_{ns}} 0 \iff \widehat \chern_2(\sE)\cdot H_1\cdot \ldots \cdot H_{n-2}=0\ \ \forall \  (H_1,\ldots,H_{n-2}),
\end{flalign*}
where $H_1,\ldots,H_{n-1}$ are ample divisor on $X$.
\end{defn}

In the case $\det \sE$ is $\bQ$-Cartier, it is an elementary 
exercise to show that $\widehat \chern_1(\sE)\equiv_{_{ns}} 0$
if and only if $\widehat \chern_1(\sE)\equiv 0$. One can also
appeal directly to the numerical triviality criterion of Kleiman 
given in \cite{Kle66} to see this. When it is not
$\bQ$-Cartier however, it makes little sense to talk about
numerical triviality in the usual sense since intersection with
arbitrary curves has no sense and, even if it does, this notion
would be different from the numerical triviality on the Picard group 
in general.
\

\subsubsection{Bogomolov inequality for $\mathbb{Q}$-bundles} 

With the setup as above and $\rank(\sE)= r$, a natural combination of $\bQ$-Chern classes as defined in (\ref{lin-comb}) is $$\Delta_B({\sE}):=2r\cdot \widehat \chern_2(\sE)-(r-1)\cdot {\widehat c_1}^{\,2}(\sE).$$ 
%The combination appears for example in the Bogomolov inequality for semistable sheaves. 
According to~\cite[Lem.~2.5]{Kaw92} any semistable reflexive sheaf $\sF$ on a \emph{projective} normal surface $S$ with only quotient singularities verifies the \emph{Bogomolov inequality}
\begin{equation}\label{kaw-bog}
\Delta_B(\sF)\geq 0.
\end{equation}

Now assume that $\sE$ is semistable with respect to a polarization $(H_1,\ldots,H_{n-1})$. Then, by the classical result of Mehta-Ramanthan~\cite{MR82} generalized by Flenner~\cite{Fl84}, the restriction $\sE|_S$ is also semistable, where $S:=D_1\cap \ldots \cap\widehat D_i \cap \ldots \cap D_{n-1}$ is the complete intersection surface cut out by general members $D_i\in |m\cdot H_i|$, $m\gg0$ and $i\in\{1,\ldots,n-1\}$, after removing an ample divisor $H_i$ from the $(n-1)$-tuple $(H_1,\ldots, H_{n-1})$. Therefore, thanks to the Bogomolov inequality (\ref{kaw-bog}), we have 
\begin{equation}\label{Bogy}
\Delta_B(\sE)\cdot H_1\cdot \ldots \cdot \widehat H_i \ldots\cdot H_{n-1}\geq 0 \ \ \forall i,
\end{equation}
where $(H_1,\ldots,\widehat H_i,\ldots,H_{n-1})$ is the $(n-2)$-tuple of ample divisors defined by removing the ample divisor $H_i$ from $(H_1,\ldots, H_{n-1})$ (see Remark~\ref{Q-chern-S} for the definition of the intersection in inequality (\ref{Bogy}).)  \\[-2mm]

\subsubsection{Numerical triviality criterion for $\bQ$-Chern classes}\label{criterion:triv}  Generically semi-positive reflexive sheaves (over normal varieties with only quotient singularities in codimension-$2$) are central objects of this paper. In the next lemma we show that the $\bQ$-Chern classes of such sheaves verify a natural numerical triviality criterion (on the Picard group). We note that for minimal varieties, the tangent 
sheaf is, thanks to Miyaoka's result, an example of a generically semi-positive sheaf and  
in this context the vanishing of $c_2$ in the following Lemma has already been proved
in~\cite[Prop.~4.8]{GKP14a}.

\

\begin{lem}%[A Criterion for the Numerical Triviality of $\widehat \chern_1$ and $\widehat \chern_2$ on the Picard group]
\label{triviality:2} Let $X$ be a normal projective variety $X$ with only quotient singularities in codimension-$2$ and $\sE$ a generically semi-positive reflexive sheaf on $X$. Assume that 
\begin{equation}\label{num-triv}
  \widehat \chern_1(\sE)\cdot H_1\cdot \ldots\cdot H_{n-1}=0
     \end{equation}
holds for some $(n-1)$-tuple of ample divisors $(H_1,\ldots,H_{n-1})$, then $\widehat \chern_1(\sE) \equiv_{_{ns}} 0$.\\ 
\indent If we assume furthermore that $$\widehat \chern_2(\sE)\cdot H_1'\cdot \ldots \cdot H_{n-2}'=0$$ for an $(n-2)$-tuple of ample divisors $(H_1',\ldots,H_{n-2}')$, then 
$\widehat \chern_2(\sE)\equiv_{_{ns}} 0$.

\end{lem}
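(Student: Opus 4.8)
The plan rests on a single convexity fact that I will use twice: a symmetric multilinear form $\Phi$ on $\mathrm{NS}(X)_{\bR}$ that is nonnegative on the ample cone and vanishes at one interior (ample) point must vanish identically. To see this, freeze all but one of the arguments at ample classes; this produces a linear functional that is $\geq 0$ on the open ample cone and vanishes at an interior point, hence is identically $0$; feeding this back and iterating over the remaining slots (the ample cone spans $\mathrm{NS}(X)_{\bR}$) forces $\Phi\equiv 0$. For the first assertion I would apply this to $\Phi(L_1,\dots,L_{n-1}):=\widehat\chern_1(\sE)\cdot L_1\cdots L_{n-1}$. Since $\sE$ is a torsion-free quotient of itself, generic semi-positivity gives $\mu_{(L_1,\dots,L_{n-1})}(\sE)=\Phi(L_1,\dots,L_{n-1})/\rank(\sE)\geq 0$ on every ample tuple, while hypothesis (\ref{num-triv}) says $\Phi$ vanishes at one such tuple; the convexity fact then yields $\widehat\chern_1(\sE)\equiv_{_{ns}}0$.

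For the second assertion I would first upgrade generic semi-positivity to semistability. By the first part $\mu_{\hbar}(\sE)=0$ for every polarization $\hbar$. Given a subsheaf $\sG\subseteq\sE$ with saturation $\sG'$, the quotient $\sE/\sG'$ is torsion-free, so $\mu_{\hbar}(\sE/\sG')\geq 0$, and additivity of $\widehat\chern_1$ together with $\mu_{\hbar}(\sE)=0$ forces $\mu_{\hbar}(\sG)\leq\mu_{\hbar}(\sG')\leq 0=\mu_{\hbar}(\sE)$; hence $\sE$ is $\hbar$-semistable for every $\hbar$. The Bogomolov inequality (\ref{Bogy}) then gives $\Delta_B(\sE)\cdot A_1\cdots A_{n-2}\geq 0$ for every ample tuple. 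Since $\Delta_B(\sE)=2r\,\widehat\chern_2(\sE)-(r-1)\widehat\chern_1^{\,2}(\sE)$, once $\widehat\chern_1^{\,2}(\sE)\equiv_{_{ns}}0$ is known this reads $\widehat\chern_2(\sE)\cdot A_1\cdots A_{n-2}\geq 0$ for all ample tuples; combined with the hypothesis that it vanishes for one tuple, the convexity fact delivers $\widehat\chern_2(\sE)\equiv_{_{ns}}0$.

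It therefore remains to establish $\widehat\chern_1^{\,2}(\sE)\equiv_{_{ns}}0$, and this is where the possible failure of $\det\sE$ to be $\bQ$-Cartier on $X$ is the genuine obstacle: there is no formal way to feed the non-$\bQ$-Cartier class $\widehat\chern_1(\sE)$ into its own vanishing functional. My plan is to descend to a general complete-intersection orbifold surface $S$ cut out by $|mA_i|$ as in Remark~\ref{Q-chern-S}, so that $\widehat\chern_1^{\,2}(\sE)\cdot A_1\cdots A_{n-2}$ is a positive multiple of $D_S^{2}$, where $D_S:=\widehat\chern_1(\sE|_S)$. On $S$ the determinant $\det(\sE|_S)$ is a genuine orbifold $\bQ$-Cartier class, and $D_S\cdot(H|_S)=0$ for every ample $H$ on $X$ by the first part. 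Granting that $\sE|_S$ inherits generic semi-positivity on $S$, the $\bQ$-Cartier case of the triviality criterion applies: because $D_S$ is now $\bQ$-Cartier, the classes $A_S\pm\epsilon D_S$ are ample on $S$ for small $\epsilon$, and generic semi-positivity (applied to $\sE|_S$ as a quotient of itself) gives $\pm\epsilon\,D_S^{2}/r=\mu_{A_S\pm\epsilon D_S}(\sE|_S)\geq 0$, whence $D_S^{2}=0$. Equivalently one may invoke Kleiman's criterion \cite{Kle66} together with the Hodge index theorem on the orbifold surface $S$. This gives $\widehat\chern_1^{\,2}(\sE)\cdot A_1\cdots A_{n-2}=0$ for every ample tuple, closing the argument.

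The hard part is precisely this last reduction, and within it the input I expect to need to justify carefully is that generic semi-positivity is preserved under restriction to the general complete-intersection surface $S$ with respect to arbitrary polarizations of $S$ (not merely those restricted from $X$), which I would obtain from Mehta--Ramanathan restriction \cite{MR82} and the preservation of the minimal Harder--Narasimhan slope. Everything else is either the convexity fact, the semistability bookkeeping, or the already-recorded Bogomolov inequality (\ref{Bogy}).
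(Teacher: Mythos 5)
Your argument for the first assertion, and the overall architecture of the second (upgrade generic semi-positivity to semistability for every polarization, invoke the Bogomolov inequality (\ref{Bogy}) to get pseudo-effectivity of $\widehat\chern_2(\sE)$ on ample tuples, then kill it by vanishing at one interior tuple), is exactly the proof in the paper; your ``convexity fact'' is the same computation as the paper's expansion of $m^{n-1}H_1\cdots H_{n-1}$ as $\prod\bigl((mH_i-A_i)+A_i\bigr)$. You have, moreover, correctly isolated the one point the paper's proof passes over in silence: Bogomolov controls $2r\,\widehat\chern_2(\sE)-(r-1)\,\widehat\chern_1^{\,2}(\sE)$, so to extract $\widehat\chern_2(\sE)\cdot A_1\cdots A_{n-2}\geq 0$ one needs $\widehat\chern_1^{\,2}(\sE)\cdot A_1\cdots A_{n-2}=0$; the Hodge index theorem only gives $\leq 0$, which is the wrong direction. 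In the paper this is harmless because every application of the second assertion has $\det\sE$ $\bQ$-Cartier (cf.\ Remark~\ref{equiv-vanish}), in which case $\widehat\chern_1(\sE)\equiv_{_{ns}}0$ forces $\det\sE\equiv 0$ and hence the vanishing of $\widehat\chern_1^{\,2}(\sE)$ against all ample tuples.

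The gap is in your proposed repair of that step. On the general complete-intersection surface $S$ the class $D_S=\widehat\chern_1(\sE|_S)$ is $\bQ$-Cartier only because $S$ is $\bQ$-factorial; it is in general \emph{not} the restriction of a $\bQ$-Cartier class from $X$ (if it were, one could argue on $X$ directly). Consequently the polarizations $A_S\pm\epsilon D_S$ do not come from $X$, and neither Mehta--Ramanathan restriction nor ``preservation of the minimal Harder--Narasimhan slope'' says anything about $\sE|_S$ with respect to them: those theorems yield (semi)stability of the restriction only for the restricted polarization. Generic semi-positivity of $\sE|_S$ as a sheaf on $S$ --- semipositivity of the slopes of all torsion-free quotients of $\sE|_S$ against all polarizations of $S$ --- is strictly stronger than what restriction theorems provide. (A rank-one illustration: a line bundle $L$ on a surface with $L\cdot A=0$ but $L\not\equiv 0$ is $A$-stable of slope zero, is not generically semi-positive, and has $L^2<0$.) So as written neither the perturbation argument nor the Kleiman/Hodge-index alternative closes. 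The clean fix is to assume (or verify in each application) that $\det\sE$ is $\bQ$-Cartier, whereupon the first assertion already gives $\det\sE\equiv 0$ and $\widehat\chern_1^{\,2}(\sE)\cdot A_1\cdots A_{n-2}=0$ for free; this is the situation in every use the paper makes of the lemma.
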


\begin{proof} Aiming for a contradiction, suppose there exists a polarization $(A_1,\ldots,A_{n-1})$ such that $\widehat \chern_1(\sE)\cdot A_1\cdot \ldots\cdot A_{n-1}\neq 0$. Then, by the generic semi-positivity assumption, we have
\begin{equation}\label{eq:-2}
  \widehat \chern_1(\sE)\cdot A_1\cdot \ldots\cdot A_{n-1}>0.
    \end{equation}
Now let $m\in \mathbb{N}^+$ be a sufficiently large integer such that $(mH_i-A_i)$ is ample for all $i\in\{1,\ldots,n-1\}$ and consider the equality 
\begin{IEEEeqnarray}{rCl}\label{eq:-1}
m^{n-1}\cdot \widehat \chern_1(\sE)\cdot H_1\cdot \ldots \cdot H_{n-1} & = & \widehat \chern_1(\sE)\cdot \bigl((mH_1-A_1)+A_1\bigr)\cdot \nonumber\\
&&  \cdot \> \ldots \cdot \bigl((mH_{n-1}-A_{n-1})+A_{n-1}\bigr).
\end{IEEEeqnarray}
The right-hand side of (\ref{eq:-1}) is strictly positive by (\ref{eq:-2}) (and by the generic semi-positivity of $\sE$). But the left hand-side is equal to zero by the assumption, a contradiction.   \\[-3mm]
   
To prove the numerical triviality of $\widehat \chern_2(\sE)$, we argue similarly by using the Bogomolov inequality (\ref{Bogy}): First we observe that the generic semi-positivity of $\sE$ together with $\widehat \chern_1(\sE) \equiv_{_{ns}} 0$ implies that $\sE$ is semistable independent of the choice of a polarization. Therefore the second $\mathbb{Q}$-Chern class of $\sE$ is "pseudo-effective" by (\ref{Bogy}), that is
\begin{equation}\label{eq:PE}
   \widehat \chern_2(\sE)\cdot H_1\cdot \ldots\cdot H_{n-2}\geq 0
     \end{equation}
holds for all $(n-2)$-tuples of ample divisors $(H_1,\ldots, H_{n-2})$.  Suppose to the contrary that there exists  ample divisors $A_1',\ldots,A_{n-2}'$ such that $\widehat \chern_2(\sE)\cdot A_1'\cdot \ldots\cdot A_{n-2}'\neq 0$, i.e. 
\begin{equation}\label{positive}
   \widehat \chern_2(\sE)\cdot A_1'\cdot \ldots\cdot A_{n-2}'>0.
      \end{equation}
Set $m'\in \mathbb{N}^+$ to be a sufficiently large positive integer such that $(m'H_i'-A_i')$ is ample for all $i\in \{1,\ldots,n-2\}$. Then
\begin{IEEEeqnarray*}{rCl}
0=m'^{n-2}\cdot \widehat \chern_2(\sE)\cdot H_1'\cdot \ldots \cdot H_{n-2}' & = & \widehat \chern_2(\sE)\cdot \bigl((mH_1'-A_1')+A_1'\bigr)\cdot \nonumber\\
&& \cdot \> \ldots \cdot \bigl((mH_{n-2}'-A_{n-2}')+A_{n-2}'\bigr),
\end{IEEEeqnarray*}
which is a contradiction as the right-hand side is strictly positive by the Bogomolov inequality~\ref{Bogy} and the inequality~\ref{positive}.
\end{proof}
\noindent\\[-11mm]
\begin{rem}\label{equiv-vanish} Lemma~\ref{triviality:2} (see also the discussion after Definition~\ref{triviality}) in particular shows that for generically semipositive reflexive sheaves $\sE$ whose determinant 
$(\det \sE)$ is a $\bQ$-Cartier divisor, the two sets of vanishing conditions [$\det (\sE)\equiv 0$, $\widehat \chern_2(\sE)\equiv_{_{ns}}0$] and [$\widehat \chern_1(\sE)\cdot A_1\cdot \ldots \cdot A_{n-1}=0$, $\widehat \ch_2(\sE)\cdot H_1\ldots \cdot H_{n-2}=0$], where $(A_1,\ldots, A_{n-1})$ and $(H_1,\ldots, H_{n-2})$ are any $(n-1)$ and $(n-2)$-tuples of ample divisors in $X$, are equivalent. So for example in the case of a non-uniruled $\bQ$-Gorenstein variety $X$, the vanishing conditions in (\ref{assumps}.\ref{assume:2}) when $\sF=\sT_X$ is the same as $K_X\equiv 0$ and $\widehat \chern_2(\sT_X)\equiv_{_{ns}}0$.\\[-3mm]
\end{rem}

\subsection{$\bQ$-sheaves and global constructions}\label{global}

The theory of $\bQ$-sheaves was introduced by Mumford~\cite{Mum83} as an algebraic generalization of $\bQ$-vector bundles to a much larger class of coherent sheaves for which a meaningful notion of Chern classes can be defined. In this section we briefly recall some elementary facts that are needed for our results and we refer to Mumford~\cite{Mum83} (see also~\cite{Meg92}) for a detailed account of this theory. 
A concise but detailed account of all the notions and results needed in this 
paper can also be found in~\cite[Sect.~3]{GKPT15}.

Let $X$ be a normal projective variety with only quotient singularities. Then according to Mumford (\cite[Chapt.~2]{Mum83}) there exists a \emph{$\bQ$-structure}  given by the collection of charts $(U_{\alpha},p_{\alpha}:X_{\alpha}\to U_{\alpha})$, where $U_{\alpha}$ are quasi-projective, $p_{\alpha}$ are \'etale in codimension-$1$ and $X_{\alpha}$ are smooth. Let $G_{\alpha}:=\Gal(X_{\alpha}/U_{\alpha})$. We call a coherent sheaf $\sE$ on $X$ a \emph{$\bQ$-sheaf}, if there exists coherent $G_{\alpha}$-sheaves $\sE_{\alpha}$ on $X_{\alpha}$ such that $\sE_{\alpha}^{G_{\alpha}}=\sE|_{U_{\alpha}}$.

Now, let $K$ be a Galois extension of the function field $k(X)$ containing all the function fields $k(X_{\alpha})$ and let $\widehat X$ be the normalization of $X$ in $K$. Let $G$ be the Galois group. By construction, the corresponding finite morphism $p:\widehat X\to X$ factors though each $p_{\alpha}:X_{\alpha}\to U_{\alpha}$, i.e. there exists a collection of  finite morphisms $q_{\alpha}:\widehat X_{\alpha}\to X_{\alpha}$ giving a commutative diagram \\[-7mm]
$$
 \begin{xymatrix}{
    \widehat X_{\alpha}  \ar@/^5mm/[rrrr]^{p|_{\widehat X_{\alpha}}} \ar[rr]_{q_{\alpha}} && X_{\alpha}
     \ar[rr]_{p_{\alpha}} && U_{\alpha}.}
    \end{xymatrix} 
 $$ 
For a $\bQ$-sheaf (or a $\bQ$-vector bundle) $\sE$ on $X$, we can define a coherent sheaf $\widehat\sE$ on $\widehat X$ by gluing together the local data given by $\bigl(\widehat X_{\alpha},\widehat\sE_{\alpha}:=q_{\alpha}^*(\sE_{\alpha})\bigr)$. A result of Mumford (\cite[Prop.~2.1]{Mum83}) shows that when the global cover $\widehat X$ is Cohen-Macaulay,  any coherent sheaf $\widehat \sE$ on $\widehat X$ 
%arising from a $\bQ$-sheaf on $X$ 
admits finite resolutions by locally-free sheaves and hence admits well-defined Chern classes. In our situation, we define the i-th $\bQ$-Chern class of $\sE$ as a ($G$-invariant) cohomology class on $\widehat X$ by $\widehat \chern_i(\sE):=(1\big{/} |G|)\chern_i(\widehat\sE)$. Following \cite{Mum83}, the $G$-invariant cohomology classes on $\widehat X$ identifies with homology cycles of complementary dimension on $X$ and
we define the intersection of $\widehat \chern_i(\sE)$ with cycles on $X$ by its intersection with the pullback cycle on $\widehat X$. This definition agrees with the analytic one in Section~\ref{Q-stuff} via the projection formula.  We refer the reader to~\cite{Mum83} for the intersection theory of $\bQ$-sheaves and note in particular that, as any normal surface is Cohen-Macaulay, we can always define $\bQ$-Chern classes of $\bQ$-sheaves on a normal irreducible surface $S$ with only quotient singularities. Finally, we note that the algebraic Hodge-Index theorem holds for $\widehat\chern_1$ by considering it on a desingularization of $\widehat S$.

\begin{rem}[An Equivalent Definition for $\widehat \chern_1\equiv 0$]\label{equi-def} Let $S$ be a normal surface with only quotient singularities. Let $p:\widehat S\to S$ be the global cover that was constructed above and $\widehat\sE$ the $G$-locally-free sheaf on $\widehat S$. It is not difficult to see that 
\begin{equation}\label{equivalence}
\widehat \chern_1(\sE)\equiv 0 \iff \chern_1(\widehat\sE)\equiv 0.
\end{equation}
The reason is that the $\bQ$-factoriality of $S$ (recall that any normal variety with quotient singularities is $\bQ$-factorial~\cite[Prop.~5.15]{KM98}), together with $\widehat \chern_1(\sE)\equiv 0$, implies that $\det(\sE)$ is numerically trivial as a $\bQ$-Cartier divisor. Notice that by construction we have $\widehat\sE=p^{[*]}(\sE)$. Therefore, the projection formula for Chern classes of pull-back bundles implies that $\chern_1(\widehat\sE)\cdot A=0$, for every ample divisor $A$ in $\widehat S$. The equivalence (\ref{equivalence}) now follows from Kleiman's numerical triviality criterion for $\bQ$-Cartier divisors~\cite[Prop.~3]{Kle66} which gives the elementary (linear algebra) fact that over a normal projective variety a $\bQ$-Cartier divisor is numerically trivial if it has zero intersection with all polarizations.\\[-3mm]
\end{rem}

\subsection{Behaviour of reflexive sheaves under finite quasi-\'etale morphisms}

We offer some elementary facts on the behaviour of reflexive sheaves under finite morphisms that are \'etale in codimension-$1$, i.e.\ finite \emph{quasi-\'etale} morphisms, ending with an observation regarding the stability of the tangent sheaf of klt varieties. %We follow the standard convention that Galois morphisms are finite.

\begin{lem}%[$\bQ$-Chern Classes and Reflexive Pull-Backs]
\label{Q-projection} Let $X$ be a normal projective variety with only quotient singularities in codimension-$2$. Let $f:Y\to X$ be a finite Galois morphism that is \'etale in codimension-$1$ (with $Y$ normal). Then $Y$ has only quotient singularities in codimension-$2$ and,
for $\Delta$ a linear combination of $\widehat \chern_2$ and $\widehat \chern_1^2$,
a reflexive sheaf $\sE$ on $X$ satisfies $\Delta(\sE)\cdot H_1\cdot \ldots \cdot H_{n-2}=0$ for some $(n-2)$-tuple of ample divisors $(H_1,\ldots,H_{n-2})$ if and only if 
\begin{equation*}
\Delta\bigl(f^{[*]}(\sE)\bigr)\cdot f^*H_1\cdot \ldots \cdot f^*H_{n-2}=0.
\end{equation*}
 \\[-4mm]
\end{lem}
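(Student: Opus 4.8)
The plan is to reduce the entire statement to two functorial properties of $\bQ$-Chern classes under the reflexive pullback $f^{[*]}$, namely the identity $\widehat\chern_i(f^{[*]}\sE)=f^*\widehat\chern_i(\sE)$ and the projection formula, after first disposing of the claim on singularities. For the singularities, write $X_1\subseteq X$ for the orbifold locus, so that $\codim_X(X\setminus X_1)\geq 3$; as $f$ is finite, $\codim_Y(Y\setminus f^{-1}(X_1))\geq 3$ as well. Over $X_1$ the map restricts to a finite quasi-\'etale cover $f^{-1}(X_1)\to X_1$ of normal varieties, and $X_1$, having only quotient singularities, is klt. By \cite[Prop.~5.20]{KM98} the klt condition is preserved under such covers, so $f^{-1}(X_1)$ is klt, and a klt variety has only quotient singularities in codimension-$2$ by Remark~\ref{klt-orbifold}. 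Combined with the codimension estimate, this shows $Y$ has only quotient singularities in codimension-$2$ (equivalently, one simply composes the uniformizing charts of $X_1$ with $f$).

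Next I would establish the functoriality. On the locus $X_2$ where $\sE$ is a $\bQ$-vector bundle (with complement of codimension $\geq 3$), I would pull back the uniformizing charts $(X_\alpha,p_\alpha,\sE_\alpha)$ by $f$: taking normalizations of fiber products produces compatible local uniformizations $q_\alpha$ of $f^{-1}(X_2)$ on which $f^*\sE$ carries the $G_\alpha$-equivariant locally free data $q_\alpha^*\sE_\alpha$. Since $f$ is \'etale over $X_{\reg}$, the reflexive pullback $f^{[*]}\sE$ agrees there with this pullback $\bQ$-bundle, so $f^{[*]}\sE$ is the $\bQ$-vector bundle on the orbifold locus of $Y$ given by the $q_\alpha^*\sE_\alpha$. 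Pulling back $G_\alpha$-invariant hermitian metrics, the Chern forms satisfy $f^*\Theta_i(\sE,h)=\Theta_i(f^{[*]}\sE,f^*h)$, whence $\widehat\chern_i(f^{[*]}\sE)=f^*\widehat\chern_i(\sE)$ as $\bQ$-cohomology classes, and likewise $\Delta(f^{[*]}\sE)=f^*\Delta(\sE)$ for any linear combination $\Delta$ of $\widehat\chern_2$ and $\widehat\chern_1^{\,2}$.

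The intersection-number statements then follow from the projection formula, which gives
\[
\Delta(f^{[*]}\sE)\cdot f^*H_1\cdots f^*H_{n-2}=\deg(f)\cdot\bigl(\Delta(\sE)\cdot H_1\cdots H_{n-2}\bigr);
\]
since $\deg(f)\neq 0$, the two sides vanish simultaneously, proving the displayed equivalence for the given tuple. For the $\equiv_{_{ns}}$ statement, the implication $\Delta(f^{[*]}\sE)\equiv_{_{ns}}0\Rightarrow\Delta(\sE)\equiv_{_{ns}}0$ is immediate by testing on the pulled-back ample tuples $f^*H_i$ (which are ample because $f$ is finite). The determinant assertion is the codimension-one analogue: one has $\det(f^{[*]}\sE)=f^{[*]}(\det\sE)$, and for a $\bQ$-Cartier divisor numerical triviality means vanishing against all curves, a condition preserved by $f^*$ and detected after applying $f_*$ (using $f_*f^*=\deg f$) via Kleiman's criterion \cite{Kle66}, exactly as in Remark~\ref{equi-def}; this yields both directions at once.

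The main obstacle is the reverse implication $\Delta(\sE)\equiv_{_{ns}}0\Rightarrow\Delta(f^{[*]}\sE)\equiv_{_{ns}}0$ for the codimension-two class. Here one must evaluate $f^*\Delta(\sE)$ against an \emph{arbitrary} ample tuple $(A_1,\dots,A_{n-2})$ on $Y$, not merely against pulled-back ones, and the projection formula only gives $f^*\Delta(\sE)\cdot A_1\cdots A_{n-2}=\Delta(\sE)\cdot f_*(A_1\cdots A_{n-2})$, where $f_*(A_1\cdots A_{n-2})$ need not be a product of divisor classes on $X$; thus the hypothesis $\Delta(\sE)\equiv_{_{ns}}0$, which controls only products of divisors, does not apply verbatim. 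To close this I would exploit the $\Gal(Y/X)$-invariance of $f^*\Delta(\sE)$ to replace each $A_i$ by its average $\tfrac1{|G|}\sum_g g^*A_i=f^*B_i$, reducing to invariant, hence pulled-back, polarizations; the delicate point is that averaging a multilinear form does not distribute over its factors, so one is forced either onto the diagonal via a polarization argument or—most cleanly—to invoke that in the intended applications $\sE$ is generically semi-positive, whence by Lemma~\ref{triviality:2} single-tuple vanishing already upgrades to $\equiv_{_{ns}}0$ and the reverse implication drops out of the intersection-number statement. I expect this reduction to the generically semi-positive case (or an equivalent use of the Bogomolov positivity of $\widehat\chern_2$) to be the crux.
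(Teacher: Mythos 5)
Your proposal follows the paper's own route for every part of the lemma: the codimension-two singularity claim via Remark~\ref{klt-orbifold} and \cite[Prop.~5.20]{KM98}; the identity $\Delta(f^{[*]}\sE)=f^*\Delta(\sE)$ obtained by pulling back the uniformizing charts (the paper forms the fibre products $Y_\alpha=X_\alpha\times_{U_\alpha}V_\alpha$ and shows they are already smooth by purity of the branch locus, so your normalization step is the same construction) together with pulled-back invariant metrics and Chern forms; the fixed-tuple equivalence via the projection formula and $\deg(f)\neq 0$; and the determinant statement via pushforward of curves and Kleiman's criterion. All of that matches and is correct.

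The one point of divergence is the implication $\Delta(\sE)\equiv_{_{ns}}0\Rightarrow\Delta(f^{[*]}\sE)\equiv_{_{ns}}0$, and your diagnosis there is accurate. The paper disposes of this in one line by ``summing the polarization divisors over the action of the Galois group,'' which is precisely the averaging you consider; and, as you observe, knowing that $\overline{A_i}:=\tfrac1{|G|}\sum_g g^*A_i$ is an invariant (hence, up to the $\bQ$-Cartier issue for $f_*A_i$, pulled-back) polarization only controls the full average $\tfrac1{|G|^{n-2}}\sum_{g_1,\dots,g_{n-2}} f^*\Delta(\sE)\cdot g_1^*A_1\cdots g_{n-2}^*A_{n-2}$, not the single term $f^*\Delta(\sE)\cdot A_1\cdots A_{n-2}$. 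Be aware, however, that neither of your proposed repairs proves the lemma as stated: polarization reduces to the diagonal values $f^*\Delta(\sE)\cdot A^{n-2}$, but averaging $A$ reintroduces exactly the same uncontrolled cross terms (a $G$-invariant symmetric multilinear form can vanish on the $G$-invariant subspace without vanishing identically, so no purely formal argument can close this); and the appeal to generic semi-positivity with Lemma~\ref{triviality:2} imports a positivity hypothesis on $\sE$ that the lemma does not make, hence proves a weaker statement. That weaker statement does cover every actual use of the lemma in the paper, where only the fixed-tuple equivalence and the determinant claim are invoked, or $\sE$ is generically semi-positive. So the honest conclusion is that your proof is complete and essentially identical to the paper's for all parts except the reverse $\equiv_{_{ns}}$ implication, where you have correctly located a step that the paper's own one-line justification also does not fully establish.
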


\begin{proof} The first claim follows from Remark~\ref{klt-orbifold}.
Now, define $\sG:=f^{[*]}(\sE)$ and let $(U_{\alpha},p_{\alpha}:X_{\alpha}\to U_{\alpha};U_{\alpha}=X_{\alpha}/G_{\alpha})$ be the local $\bQ$-structure (see Section~\ref{Q-stuff}) for $X_1$, where $X_1$ is equal to $X$ minus its non-orbifold locus. Let $\{\sE_{\alpha}\}$ be the collection of $G_{\alpha}$-sheaves on $X_{\alpha}$. Define $V_{\alpha}:=f^{-1}U_{\alpha}$ and let $Y_{\alpha}:=X_{\alpha}\times_{U_{\alpha}}V_{\alpha}$ be the fibre product given by the base change $p_{\alpha}:X_{\alpha}\to U_{\alpha}$ with the corresponding commutative diagram  
$$
  \xymatrix{
    Y_{\alpha} \ar[r]^{r_{\alpha}}  \ar[d]_{g_{\alpha}} & V_{\alpha}  \ar[d]^{f_{\alpha}:=f|_{V_{\alpha}}} \\
      X_{\alpha} \ar[r]^{P_{\alpha}}       & U_{\alpha}.
  }
  $$
Since $X_{\alpha}$ is smooth and $f_{\alpha}:V_{\alpha}\to U_{\alpha}$ is \'etale in codimension-$1$, so is $g_{\alpha}$. The purity of the branch locus says that $g_{\alpha}$ is \'etale and $Y_{\alpha}$ is smooth. Therefore $\sG_{\alpha}=g_{\alpha}^*(\sE_{\alpha})$, $\sG_{\alpha}$ being the locally-free sheaf on $Y_{\alpha}$ invariant under the action of $H_{\alpha}:=\mathrm{Gal}(Y_{\alpha}/ V_{\alpha})$ such that $\sG_{\alpha}^{H_{\alpha}}=\sG|_{V_\alpha}$. A collection of $G_{\alpha}$-invariant metrics $h_{\alpha}$ on $X_{\alpha}$ and  corresponding Chern forms $\Theta_{\alpha,i}(\sE,h_{\alpha})$ (see Section~\ref{Q-stuff}) thus induces $H_{\alpha}$-invariant Chern forms $\widehat\Theta_{\alpha,i}(\sG,g_{\alpha}^*h_{\alpha})$ on $Y_{\alpha}$ given by $g_{\alpha}^*(\Theta_{\alpha,i})$. Hence $\widehat \chern_i(\sG|_{Y_1})=(f|_{X_1})^*\widehat \chern_i(\sE|_{X_1})\in H^{2i}(Y_1,\bQ)$, where $Y_1:=f^{-1}(X_1)$. The lemma now follows from the projection formula.

\end{proof}

\begin{lem}\label{gsp} Let $f:Y\to X$ be a finite Galois morphism between normal varieties that is \'etale in codimenions-$1$, $\sE$ a reflexive sheaf on $X$
and $\hbar:=(H_1,\ldots,H_{n-1})$ a fixed polarization on $X$ with $H_i$ ample. Then $\sE$ is semistable with respect to $\hbar$ if and only if $f^{[*]}\sE$ is semistable with respect to $f^*\hbar:=(f^*H_1,\ldots,f^*H_{n-1})$.

\end{lem}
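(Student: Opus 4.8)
The plan is to reduce everything to a single slope-scaling identity and then to treat the two implications by transporting destabilizing subsheaves across $f$, using the Galois action to descend in the harder direction.

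First I would establish the basic comparison of slopes. Since $f$ is \'etale in codimension-$1$, for any reflexive sheaf $\sG$ on $X$ one has $\det(f^{[*]}\sG)=f^{[*]}(\det\sG)$ and hence $\chern_1(f^{[*]}\sG)=f^*\chern_1(\sG)$ as classes on $Y$ (this is precisely the computation already recorded in the proof of Lemma~\ref{Q-projection}). Combining this with $\rank(f^{[*]}\sG)=\rank\sG$ and the projection formula $f^*D_1\cdots f^*D_n=\deg(f)\,(D_1\cdots D_n)$ gives the key identity
\begin{equation*}
\mu_{f^*\hbar}\bigl(f^{[*]}\sG\bigr)=\deg(f)\cdot\mu_{\hbar}(\sG)
\end{equation*}
for every reflexive $\sG$ on $X$, and in particular $\mu_{f^*\hbar}(f^{[*]}\sE)=\deg(f)\cdot\mu_{\hbar}(\sE)$. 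Because $\deg(f)>0$, slope inequalities are preserved under $f^{[*]}$, which is what makes the correspondence between destabilizers on the two sides faithful.

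For the implication that semistability of $f^{[*]}\sE$ forces that of $\sE$, I would argue by contraposition. Replacing any destabilizing subsheaf by its saturation, assume there is a saturated $\sG\subset\sE$ with $0<\rank\sG<\rank\sE$ and $\mu_\hbar(\sG)>\mu_\hbar(\sE)$. Working over the open set $U\subset X_{\reg}$ where $f$ is \'etale and $\sE$ is locally free (its complement has codimension $\geq 2$, so no slope is affected), the inclusion $\sG|_U\hookrightarrow\sE|_U$ pulls back under the \'etale map to a subbundle of $f^*(\sE|_U)$; extending by pushforward and reflexivizing yields $f^{[*]}\sG\subset f^{[*]}\sE$. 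Then $\mu_{f^*\hbar}(f^{[*]}\sG)=\deg(f)\mu_\hbar(\sG)>\deg(f)\mu_\hbar(\sE)=\mu_{f^*\hbar}(f^{[*]}\sE)$, contradicting semistability of $f^{[*]}\sE$. The reverse implication is the substantive one and uses the Galois hypothesis. Suppose $\sE$ is semistable but $f^{[*]}\sE$ is not, and let $\sF\subset f^{[*]}\sE$ be the maximal destabilizing subsheaf, i.e. the first term of the Harder--Narasimhan filtration with respect to $f^*\hbar$. Since $\sE$ is pulled back from $X=Y/G$ with $G:=\Gal(Y/X)$, the sheaf $f^{[*]}\sE$ carries a canonical $G$-equivariant structure and $f^*\hbar$ is $G$-invariant; by uniqueness of the maximal destabilizing subsheaf, $\sF$ is therefore $G$-invariant. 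A $G$-invariant saturated subsheaf descends over the \'etale quotient $V=f^{-1}(U)\to U$ to a subsheaf of $\sE|_U$, whose reflexive extension $\sG\subset\sE$ satisfies $f^{[*]}\sG=\sF$ away from codimension $2$. The slope identity then gives $\mu_\hbar(\sG)=\deg(f)^{-1}\mu_{f^*\hbar}(\sF)>\deg(f)^{-1}\mu_{f^*\hbar}(f^{[*]}\sE)=\mu_\hbar(\sE)$, contradicting semistability of $\sE$.

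The main obstacle I anticipate is the reflexive/equivariant bookkeeping rather than any deep input: one must verify that reflexive pullback along $f$ is compatible with restriction to the codimension-$2$-complement locus $U$ (so that slopes, inclusions and saturations may all be computed there, where $f$ is genuinely \'etale), that the $G$-equivariant structure on $f^{[*]}\sE$ is the natural one and carries $G$-invariant subsheaves down to honest subsheaves of $\sE$, and that the maximal destabilizing subsheaf is indeed unique and hence $G$-stable. Each of these is standard once one restricts to $U$ and uses that a reflexive sheaf is recovered as the pushforward of its restriction to any open set with codimension-$\geq 2$ complement, as recalled in the subsection on reflexive operations.
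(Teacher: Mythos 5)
Your proposal is correct and follows essentially the same route as the paper: the paper's (one-sentence) proof rests precisely on the uniqueness of the maximal destabilizing subsheaf of $f^{[*]}\sE$, hence its $\Gal(Y/X)$-invariance and descent to a destabilizer of $\sE$. Your additional details --- the slope-scaling identity $\mu_{f^*\hbar}(f^{[*]}\sG)=\deg(f)\,\mu_{\hbar}(\sG)$ and the easy direction by pulling back a saturated destabilizer --- are the routine steps the paper leaves implicit.
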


\begin{proof} 
The direction where $f^{[*]}\sE$ is assumed to be semistable is clear. 
For the other direction, the proof follows from the fact that the subsheaf $\widehat \sF$ of $f^{[*]}\sE$ with maximal $(f^*\hbar$)-slope is unique and thus invariant under the action of the group $\mathrm{Gal}(Y/X)$. As 
$f$ is \'etale in codimension-$1$, this implies that $\widehat \sF$ is a pull-back of a locally-free 
sheaf $\sF^\circ$ away from the branch locus $X_1$ of $f$. Let   
$\sF:=i_*(\sF^\circ)$, where $i$ is the inclusion $i: X\backslash X_1 \to X$, be the natural extension 
of $\sF^\circ$ onto $X$. As $\sE$ is reflexive, and since $\sF^\circ \subset \sE|_{X\backslash X_1}$, 
we have $\sF\subset \sE$. The claim now follows from the projection formula.

\end{proof}

Let $X$ be any non-uniruled normal projective variety.  The  generic semi-positivity result of Miyaoka~\cite{Miy85} says that the tangent sheaf of $X$ is generically semipositive. This is the case when a variety $X$ has numerically-trivial canonical divisor $K_X$ and $X$ has only canonical singularities. To see this, take $\pi:\wtilde X\to X$ to be a resolution of $X$ with the ramification formula $K_{\wtilde X}=\pi^*K_X+E$, where $E$ is, by definition, an effective exceptional divisor. Clearly, the numerical triviality of $K_X$ implies that $K_{\wtilde X}$ is  pseudo-effective. Therefore $\wtilde X$ is non-uniruled and thus so is $X$.
More generally, we have the following proposition.

\begin{prop}%[Semipostivity of the Tangent Sheaf of Klt Varieties]
\label{SS-KLT} Let $X$ be klt projective. If $K_X\equiv 0$, then $\Omega^{[1]}_X$ is generically semipositive.

\end{prop} 

\begin{proof} 
According to the abundance result of for klt varieties with numerically trivial canonical divisor
~\cite{Nak04}, we know that $K_X$ is a torsion, 
$\bQ$-Cartier divisor. Let $f:Z \to X$ be the associated index one cover, where 
$f^*(K_X)$ is trivial. By construction, $f$ is unramified in codimension one. Therefore 
$K_Z$ is also trivial. 
But $Z$ has at worst klt singularities by Remark~\ref{klt-orbifold}. Hence $Z$ has at most canonical singularities. From our discussion above, $Z$ is not uniruled. Thus $\Omega^{[1]}_Z=f^{[*]}\Omega^{[1]}_X$ is generically semipositive or 
equivalently $\sT_Z$ is generically seminegative. 
As $K_Z=0$, this implies that $\sT_Z$ is semistable with respect to 
$(f^*H_1, \ldots , f^*H_{n-1})$, for any $(n-1)$-tuple of ample divisors in $X$. Therefore by Lemma~\ref{gsp} 
we find that $\sT_X$ is semistable with respect to any $(H_1, \ldots, H_{n-1})$. Again, as $K_X\equiv 0$, this 
latter assertion is equivalent to $\Omega_X$ being generically semipositive. 
(Notice that in the argument above, it is possible to avoid using abundance results, once we 
have established Theorem~\ref{finite-resolution}. The reason is that Theorem~\ref{finite-resolution}
implies that there exists 
a finite Galois cover $f:Z\to X$ \'etale over $X_{\reg}$ which pulls back numerically trivial $\bQ$-Cartier divisors on $X$ to Cartier ones on $Z$, i.e. a simultaneous index one cover for such divisors.
This impies that $K_Z\simeq_{\bQ}f^*K_X$ is Cartier and this is all we need in to prove 
Proposition~\ref{SS-KLT}.)

\end{proof}

\section{Stable reflexive sheaves with vanishing $\mathbb{Q}$-Chern classes}\label{KH}

%\subsection{Stable case}

We recall that a hermitian metric $h$ on a holomorphic vector bundle $\sE$ over a compact K\"ahler manifold with K\"ahler form $w$ is said to satisfy the Einstein condition if 
\begin{equation}\label{HE}
i\, \Lambda_wF=\lambda  \id_{\sE}
\end{equation} 
for some $\lambda\in \mathbb{R}$ where $F$ is the curvature of the unitary connection compatible with the holomorphic structure. From the classical result of Donaldson, Uhlenbeck and Yau, we know that given a compact 
K\"ahler manifold $X$ of dimension $n$ and a K\"ahler class $[w]$, every $[w]$-stable holomorphic vector bundle $\sE$ admits a hermitian metric $h$ whose associated unitary connections is Hermitian-Einstein. Moreover if $\chern_1(\sE)_{\bR}=0$ and $$\int_X \chern_2(\sE)\wedge [w]^{n-2}=0,$$ then $(\sE,h)$ is flat. Our aim in this section is to prove a generalization of this result to the case of reflexive sheaves over normal projective varieties with only quotient singularities in codimension-$2$, namely Theorem~\ref{NS-result}. For this, we first examine as did \cite{SBW94} the question of how stability of a $\bQ$-vector bundle $\sE$ over a complex projective surface $S$ with only quotient singularities behaves under blowing-ups: Let $(U_{\alpha},\ p_{\alpha}\colon X_{\alpha}\to U_{\alpha})$ be the local $\bQ$-structure of $S$ (Section~\ref{Q-stuff}). Let $p:\widehat S\to S$ be the global finite cover with Galois group $G$ and let $\widehat\sE$ be the locally-free sheaf on $\widehat S$ such that $\widehat\sE^G=\sE$ (Section~\ref{global}). We study the stability of $\pi^*(\widehat\sE)$ on a $G$-equivariant desingularization $\pi:\wtilde S\to \widehat S$ (whose existence is guaranteed classically or by \cite{BM95} for example). Note that the actions of $G$ on $\widehat S$ and on $\widehat\sE$ lift uniquely to actions of $G$ on $\wtilde S$ and on $\pi^*(\widehat\sE)$ respectively. For expediency, we now allow all of our polarization divisors to be $Q$-Cartier.\\[-3mm]

\begin{prop}%[Lifting Stability to $G$-Equivariant Desingularizations]
\label{lift} Let $\sE$ be a reflexive coherent sheaf on a normal projective surface $S$ with only quotient singularities. Fix an ample divisor $H$ on $S$.  With the setup as above, if $\sE$ is stable with respect to $H$, then there exists a $G$-invariant polarization $\wtilde H$ on $\wtilde S$ such that $\mu_{\wtilde H}(\wtilde \sE)=\mu_{H}(\sE)$, where $\wtilde \sE:=\pi^*\widehat\sE$, and that $\wtilde \sE$ is $G$-stable with respect to $\wtilde H$, that is, for every 
 proper, $G$-equivariant subsheaf  $\ \wtilde \sG\subset \wtilde \sE$, we have the strict inequality\\[-5mm]
\begin{equation}\label{G-stability}
\mu_{\wtilde H}(\wtilde \sG)<\mu_{\wtilde H}(\wtilde \sE).\\[1mm]
\end{equation}
 \end{prop}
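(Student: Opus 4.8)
The plan is to reduce the $G$-stability of $\wtilde\sE$ on the smooth surface $\wtilde S$ to the ordinary stability of $\sE$ on $S$ in two descents, first through the finite cover $p$ and then through the resolution $\pi$.

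\textit{Step 1 (the cover).} Set $A:=\tfrac{1}{|G|}\,p^{*}H$, an ample $\bQ$-divisor on $\hat S$. Since $p$ is finite Galois and \'etale in codimension one, taking $G$-invariants sets up a slope-preserving correspondence between nonzero $G$-equivariant saturated subsheaves $\hat\sG\subset\hat\sE$ and saturated subsheaves $\sG\subset\sE$, with $\mu_{A}(\hat\sG)=\mu_{H}(\sG)$, the factor $\tfrac{1}{|G|}$ in $A$ compensating for $\deg p=|G|$; this is the stability analog of Lemma~\ref{gsp}, and $G$-invariance of the relevant subsheaves is automatic by uniqueness of the maximal destabilizer. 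Hence $H$-stability of $\sE$ yields that $\hat\sE$ is $A$-stable among $G$-equivariant subsheaves, with $\mu_{A}(\hat\sE)=\mu_{H}(\sE)$.

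\textit{Step 2 (the resolution).} I would take $\wtilde H:=\pi^{*}A-\epsilon F$, where $F=\sum_{i}\epsilon_{i}E_{i}$ is an effective $\pi$-exceptional $\bQ$-divisor, automatically $G$-invariant since $\pi$ is $G$-equivariant, chosen so that $\wtilde H$ is ample for every sufficiently small rational $\epsilon>0$ (possible because the exceptional curves have negative definite intersection matrix). As $\wtilde\sE=\pi^{*}\hat\sE$ and $\pi^{*}(\,\cdot\,)\cdot F=0$, the projection formula gives $\mu_{\wtilde H}(\wtilde\sE)=\tfrac{1}{r}\chern_{1}(\hat\sE)\cdot A=\mu_{A}(\hat\sE)=\mu_{H}(\sE)$, independent of $\epsilon$, which is the asserted slope equality. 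Now let $\wtilde\sG\subsetneq\wtilde\sE$ be a nonzero $G$-equivariant saturated subsheaf of rank $s$ (it suffices to treat saturated ones, as saturation only raises the slope), and put $\hat\sG:=(\pi_{*}\wtilde\sG)^{**}\subset\hat\sE$. The same computation yields the key identity $\mu_{\wtilde H}(\wtilde\sG)=\mu_{A}(\hat\sG)-\tfrac{\epsilon}{s}\,\chern_{1}(\wtilde\sG)\cdot F$. By Step~1, $\mu_{A}(\hat\sG)<\mu_{A}(\hat\sE)=\mu_{\wtilde H}(\wtilde\sE)$; moreover the gap $\mu_{A}(\hat\sE)-\mu_{A}(\hat\sG)$ is bounded below by a fixed $\delta_{0}>0$, since these slopes lie in $\tfrac{1}{s|G|}\bZ$ and $1\le s\le r-1$ forces a uniform positive spacing. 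It therefore remains to absorb the exceptional correction $-\tfrac{\epsilon}{s}\chern_{1}(\wtilde\sG)\cdot F$ into this gap.

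The main obstacle is exactly this last point: the correction term need not have a convenient sign, because for saturated $\wtilde\sG$ the restriction $\wtilde\sG|_{E_{i}}$ acquires torsion from $\mathrm{Tor}_{1}(\wtilde\sE/\wtilde\sG,\sO_{E_{i}})$, so $\chern_{1}(\wtilde\sG)\cdot E_{i}$ can be positive and a priori unbounded as $\wtilde\sG$ varies, while a single $\epsilon$ must work for all of the infinitely many equivariant subsheaves simultaneously. I would resolve this by boundedness. The restriction $\wtilde\sE|_{E_{i}}=\pi^{*}\hat\sE|_{E_{i}}$ is trivial on each contracted curve, so a saturated subsheaf is, away from finitely many points, a subbundle of a trivial bundle; this bounds $\chern_{1}(\wtilde\sG)\cdot E_{i}$ above in terms of the colength of $\wtilde\sE/\wtilde\sG$ along $E_{i}$, and Grothendieck boundedness of saturated subsheaves of $\wtilde\sE$ whose slope is within $\delta_{0}$ of $\mu(\wtilde\sE)$ then confines $\chern_{1}(\wtilde\sG)\cdot F$ to a finite set, giving a uniform bound $C$ that any $\epsilon<\delta_{0}/C$ defeats. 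Equivalently, and perhaps more cleanly, I would test only against the \emph{maximal destabilizing subsheaf} of $\wtilde\sE$ with respect to $\wtilde H$, which is canonical and hence automatically $G$-equivariant; as the Harder--Narasimhan filtration is constant for $\epsilon$ in a small punctured interval (finitely many walls), letting $\epsilon\to0^{+}$ reduces the desired strict inequality to the strict inequality $\mu_{A}(\hat\sG)<\mu_{A}(\hat\sE)$ already established downstairs, yielding the contradiction.
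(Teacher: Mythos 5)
Your overall strategy coincides with the paper's: first descend to get $G$-stability of $\hat \sE$ on $\hat S$ with respect to (a rescaling of) $p^*H$, then perturb the nef-and-big pullback $\pi^*p^*H$ by a $G$-invariant exceptional $\bQ$-divisor to obtain a genuine polarization on $\wtilde S$, and finally use the uniform gap coming from the discreteness of slopes downstairs to choose the perturbation parameter. Your Step 1 and the construction of $\wtilde H$ match the paper (which passes a $G$-subsheaf $\wtilde\sF\subset\wtilde\sE$ to a $G$-subsheaf of $\hat\sE$ by restricting to the locus where $\pi$ is an isomorphism and pushing forward, and records the quantified gap (\ref{ineq:stability1})); your normalization $A=\tfrac{1}{|G|}p^*H$ even makes the slope equality $\mu_{\wtilde H}(\wtilde\sE)=\mu_H(\sE)$ exact.

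The final uniformity step, however, does not close as you wrote it. You propose to defeat the correction $-\tfrac{\epsilon}{s}\chern_1(\wtilde\sG)\cdot F$ by a uniform bound $C$ on $\chern_1(\wtilde\sG)\cdot F$, but no such two-sided bound exists over all saturated $G$-subsheaves: twisting a line subsheaf down by $n C_0$ for a non-exceptional curve $C_0$ meeting $F$ drives $\chern_1(\wtilde\sG)\cdot F\to-\infty$. (Note also that your sign worry points the wrong way: a large \emph{positive} $\chern_1(\wtilde\sG)\cdot E_i$ makes the correction negative, which only helps; the danger is $\chern_1(\wtilde\sG)\cdot F\ll 0$.) Such subsheaves are harmless only because their $\pi^*A$-slope simultaneously tends to $-\infty$, so the correction term cannot be bounded independently of the main term. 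Your patch---restricting to subsheaves whose slope is within $\delta_0$ of the maximum---is the right instinct but is circular or inapplicable as stated ($\wtilde H$ depends on $\epsilon$, while $\pi^*A$ is only nef, so Grothendieck's lemma does not directly apply), and the alternative of testing only the maximal destabilizer as $\epsilon\to 0^+$ cannot yield \emph{strict} $G$-stability, since a $G$-subsheaf of equal $\wtilde H$-slope is invisible to it. The paper's fix is simpler and is the one you should adopt: writing $\wtilde H$ as $\pi^*\hat H$ plus a small positive multiple $\delta$ of the \emph{ample} class $\wtilde H^\circ=\pi^*\hat H-E'$ (equivalently, your $\wtilde H=(1-\epsilon)\pi^*A+\epsilon(\pi^*A-F)$ viewed as a convex combination), the only boundedness needed is the standard upper bound on the set of $\wtilde H^\circ$-slopes of all coherent subsheaves of $\wtilde\sE$, i.e.\ (\ref{upperbound}); combined with the gap $1/\rank(\wtilde\sE)^2$ in the $\pi^*\hat H$-slope, this gives (\ref{G-stability}) for all sufficiently small $\delta$ with no case analysis.
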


\begin{proof} First note that $\widehat\sE$ is semistable with respect to $\widehat H:=p^*(H)$. For otherwise the maximal destabilizing subsheaf of $\widehat\sE$ would descend to a proper $H$-destablizing subsheaf of $\sE$. 
This is because the maximal destablizing subsheaf is, thanks to its uniqueness, a $G$-sheaf and 
moreover as it is saturated inside $\widehat\sE$, it is given by a pull-back bundle outside codimension-$2$ 
(see~\cite[Prop.~2.16]{GKPT15} for a proof of this fact). It is also $G$-stable; otherwise there would exist a saturated $G$-equivariant semistable subsheaf $\widehat\sG\subset \widehat\sE$ of strictly smaller rank with
%\begin{equation}\label{stable:eq:1}
$\mu_{\widehat H}(\widehat\sG)= \mu_{\widehat H}(\widehat\sE),$
%\end{equation}
which would descend to a saturated semistable subsheaf 
$\sG\subsetneq \sE$ with $\mu_{H}(\sG)= \mu_{H}(\sE)$ and contradict the stability of $\sE$. Note also that one can arrange
 $\bigl(\pi^*(\widehat H)- E'\bigr)$ to be ample by a suitable choice of an effective and $\pi$-exceptional $\Q$-divisor $E'$ and, by averaging over $G$ if necessary, $G$-invariant.
Set $\wtilde H^\circ=\bigl(\pi^*(\widehat H)- E'\bigr)$ for this choice. Since $\widehat\sE$ is locally free, $\wtilde \sE=\pi^*\widehat\sE$ is trivial along the (reduced) exceptional divisor $E$ of $\pi$ and hence $\mu_{\wtilde H^\circ}(\wtilde \sE)=\mu_{\widehat H}(\widehat\sE)$.\\[-2mm]

Now, let $\wtilde \sF$ be any $G$-subsheaf of $\wtilde \sE$. Let $U$ be a Zariski-open subset of $\widehat S$
with $\codim_{\widehat S}(\widehat S\backslash U)=2$ such that 
$\pi|_{\pi^{-1}(U)}: \pi^{-1}(U)\to U$ is an isomorphism. Let $\widehat\sF^\circ:=(\pi^{-1}|_{\pi^{-1}U})^*\wtilde \sF$ be the $G$-subsheaf of $\widehat\sE|_U$ induced by $\wtilde \sF$. Define $\widehat\sF:={i_U}_*(\widehat\sF^\circ)$ to be the coherent extension of $\widehat\sF^\circ$ across $\widehat S\backslash U$ and notice that $\widehat\sF$ defines a $G$-equivariant coherent subsheaf of $\widehat\sE$. Now from the $G$-stability of $\widehat\sE$ with respect to $\widehat H$, we infer that the inequality 
\begin{equation}\label{ineq:stability1}
\mu_{\pi^*\widehat H}(\wtilde \sF)<\mu_{\pi^*\widehat H}(\wtilde \sE)
\end{equation}
holds. On the other hand it is easy to see that the set 
\begin{equation}\label{upperbound}
 \{\mu_{\wtilde H^\circ}(\wtilde \sG):\wtilde \sG \text{\ coherent subsheaf of $\wtilde \sE$}\}
 \end{equation}
admits an upper-bound. Therefore for all sufficiently small $\delta \in \bQ^+$,  we can always exploit the $G$-stability of $\widehat\sE$ (\ref{ineq:stability1}) to ensure that the inequality (\ref{G-stability}) holds by choosing  $\wtilde H:=\pi^*\widehat H+\delta \wtilde H^\circ$, i.e. $\wtilde \sE$ is $G$-stable with respect to $\wtilde H$.
\end{proof}

It is well known from the celebrated theorems of Donaldson and Uhlenbeck-Yau and from their proofs, see for example \cite[Thm.~5]{Pra93} and the remarks given therein, that given a compact K\"abler manifold equipped with the holomorphic action of a compact Lie group $G$ and a $G$-invariant K\"ahler form $w$, any holomorphic vector bundle that is $G$-stable with respect to $w$ admits a $G$-invariant Hermitian-Einstein connection. Therefore, %Proposition~\ref{lift}
as our choice of polarization $\wtilde H$ in the above proposition is $G$-invariant, the locally-free sheaf $\wtilde \sE=\pi^*{\widehat\sE}$ carries a Hermitian-Einstein connection $\wtilde D: {\wtilde \sE} \to\wtilde \sE \otimes  \Omega^1_{{\wtilde S}}$ that is $G$-invariant. If we further assume that $\widehat \ch_2(\sE)=0$ and $\mu_H(\sE)=0$, then $\ch_2(\wtilde \sE)=0$ and, with a K\"ahler form $w$ representing $\chern_1(\wtilde H)$, 
\begin{equation}\label{zero}
\frac{\sqrt{-1}}{2\pi}\int_{\wtilde S} \mbox{tr} (\Lambda_{w} F_{\wtilde D}) d \mbox{vol}(w)
=\int_{\wtilde S} \mbox{Ric}(\wtilde D)\wedge [w]=
\chern_1(\wtilde \sE)\cdot \wtilde H=0.
\end{equation}
In particular the unitary connection $\wtilde D$ is flat, i.e. $\wtilde D^2=0$. The flatness of $\wtilde D$ follows from 
the fact that $\wtilde D$ is Hermitian-Einstein (\ref{HE}), so that the vanishing condition (\ref{zero}) implies the vanishing of $\Lambda_{w} F_{\wtilde D}$, and from
the well-known Riemann bilinear identity, c.f. page 16 of \cite{Sim92} or equations~4.2 and 4.3 in Chapter IV of \cite{Kob87}, which takes the form
\begin{equation}\label{bilinear}
\int_{\wtilde S}\ch_2(\wtilde \sE)=C(||\Lambda_{w} F_{\wtilde D}||^2_{\mathrm{L}^2}-||F_{\wtilde D}||^2_{\mathrm{L}^2})
\end{equation}
for some positive constant $C$. \\
Furthermore, as $\wtilde D$ is compatible with the holomorphic structure of $\wtilde \sE$, 
its $(1,0)$ part defines an equivariant, holomorphically flat connection. 
Now, let $$\widehat D: \widehat \sE|_{\widehat S_{\reg}} \to \widehat \sE|_{\widehat S_{\reg}}\otimes (\Omega^1_{\widehat S_{\reg}})^G$$
be the induced $G$-invariant connection.
Here $(\Omega^1_{\widehat S_{\reg}})^G$ denotes the sheaf of $G$-invariant forms on $\widehat S_{\reg}$.
After taking $G$-invariant sections, $\widehat D$ induces a unitary, flat connection 
$D: \sE|_{S^\circ} \to \sE|_{S^\circ}\otimes \Omega^1_{S^\circ}$, where $S^\circ$ is $S_{\reg}$ 
minus a finite number of smooth points---the subset of $S_{\reg}$ over which 
we have $\widehat \sE|_{p^{-1}(S^\circ)}=p^*(\sE|_{S^\circ})$.
Since removing smooth points from $S_{\reg}$ does not change its fundamental group, we find 
that $\sE|_{S_{\reg}}$ is given by a unitary representation $\rho_{S}:\pi_1(S_{\reg})\to U(r,\bC)\subset GL(r,\mathbb{C})$ with $r$ the rank of $\sE$, cf.~\cite[Chapt.~I]{Kob87}. This representation is irreducible for otherwise we can extract a contradiction as follows: Let $C$ 
be a sufficiently general member of the basepoint-free linear system 
$|m\cdot H|$, $m\gg 0$, for which we have $C\subset S_{\reg}$ as 
a smooth projective curve and $\sE|_C$ is stable. To see this, let 
$\mu: T \to S$ be a resolution and notice that $\mu^{[*]}(\sE)$ is stable with respect to $\mu^*H$. 
If $\sE|_C$ is not stable, then $\mu^{[*]}(\sE)|_{\pi^{-1}C}$ 
is not stable, contradicting the 
Bogomolov Restriction Theorem, cf.~\cite{Bog79}.
Now, if the connection in $\sE|_{S_{\reg}}$ defined by $\rho_S$ 
is not irreducible, it restricts to a unitary but not irreducible connection 
in $\sE|_C$. By the theorem of Narasimhan-Seshadri, this implies that 
$\sE|_C$ is polystable, but not stable, contradicting the stability of $\sE|_C$.
For future references, we summarize this discussion as follows.\\[-3mm]

\

\begin{changemargin}{0.5cm}{0.5cm}
\refstepcounter{equation}\theequation\label{invariant-HE}. \emph{Given a normal irreducible surface $S$ with only quotient singularities, let $\sE$ be a stable (respectively polystable) reflexive sheaf over $S$. If\, $\widehat \chern_1(\sE)\cdot A=0$ for some ample divisor $A$ on $S$ and $\widehat \ch_2(\sE)=0$, then the analytification of $\sE|_{S_{\text{reg}}}$ is given by a unitary, irreducible (respectively possibly reducible) representation \\[-3mm]
$$\rho_S:\pi_1(S_{\reg})\to GL(r,\bC).$$}\\[-8mm]
\end{changemargin}

\

\subsection{General setup}
Since there is a significant overlap between the results and methods needed
for the proof of Theorems~\ref{NS-result} and~\ref{finite-resolution}, to 
avoid unnecessary repetitions, we collect some common ingredients in the 
following set-up.

\begin{setup}\label{setup} Let $X$ be a normal projective variety and $\sF$ 
any reflexive sheaf on $X$, stable with respect to $(H_1, \ldots, H_{n-1})$, 
with $H_i\subset X$ being ample. Let $\wtilde \pi: \wtilde X\to X$ be a  
resolution. According to~\cite[Thm.~5.2]{Lan04} the reflexive pull-back sheaf 
$\pi^{[*]}(\sF)$ satisfies a restriction Theorem, 
that is $\pi^{[*]}(\sF)|_{D_1}$ is stable, for every normal member $D_1$ of the 
linear system $|\pi^*(m\cdot H_1)|$, $m\gg 0$. It thus follows that 
$\sF$ also satisfies a restriction theorem. When $\sF$ is semistable, 
the semistability of the restriction (to general members of a linear system of 
very ample divisors) is due to Flenner, cf.~\cite{Fl84}. 

Now with the additional assumption that $X$ is klt, let $\mathcal V^\circ$ 
be the family of flat, locally-free, analytic sheaves on $X_{\reg}$. 
Define $\mathcal V_X^\circ=\{ \sF^\circ \}$ to be the family of 
locally-free, flat, analytic sheaves on $X_{\reg}$. Let $f: Y\to X$  be the
quasi-\'etale cover in~\cite[Thm.~1.5]{GKP14a}, where 
$\widehat \pi_1(Y)\cong \widehat \pi_1(Y_{\reg})$. As $(f|_{X_{\reg}})^*(\sF^\circ)$
is flat, according to~\cite[Thm.~1.14]{GKP14a} there exists 
a unique, locally-free, flat, algebraic sheaf $\wtilde \sF$ on $Y$ such that 
$(\wtilde \sF|_{Y_{\reg}})^{an}\cong (f|_{X_{\reg}})^*(\sF^\circ)$. 
Let $\mathcal V_Y=\{ \wtilde \sF \}$ denote the family of flat, locally-free, 
algebraic sheaves on $Y$ constructed as extensions of sheaves of 
form $(f|_{X_{\reg}})^*\sF^\circ$, where $\sF^\circ\in \mathcal V^\circ$.
Define $\mathcal V_X:= \{  (f_*(\wtilde \sF))^{**} \ \big| \ \wtilde \sF\in \mathcal V_Y \}$. 
As the family of locally-free, flat, algebraic sheaves on $Y$ is bounded, 
cf.~\cite[Prop.~9.1]{GKP14a}, $\mathcal V_X$ forms a bounded 
family of reflexive, algebraic sheaves on $X$.

Following the setup of the proof of~\cite[Thm.~1.20]{GKP14a}, 
we consider the increasing sequence of integers $m_1 \leq \ldots \leq m_{n-2}$
such that the complete intersection surface $S:=D_1\cap \ldots \cap D_{n-2}$, 
where $D_i$ is a general member of the basepoint-free linear system 
$|m_i\cdot H_i|$, verify the following properties. 

\begin{enumerate}\label{enum:properties}
\item \label{prop:1} The projective surface $S$ is normal, irreducible and klt, 
cf.~\cite[Lem.~5.17]{KM98}.

\item \label{prop:2} The restriction $\sF|_S$ is reflexive and (semi)stable with respect to 
$H_{n-1}|_S$, if $\sF$ is (semi)stable with respect to $(H_1, \ldots, H_{n-1})$. 
This is guaranteed by the results of 
Flenner and Langer (see the discussion in the beginning of the current setup).

\item \label{prop:3} We have the isomorphism $\pi_1(S_{\reg})\cong \pi_1(X_{\reg})$, 
which follows from Lefschetz-hyperplane theorem~\cite{HL85} for quasi-projective 
varieties. 

\item \label{prop:4} For any member $\sG\in \mathcal V_X$, the isomorphism $\sF\cong \sG$
holds, if and only if $\sF|_S \cong \sG|_S$, cf.~\cite[Cor.~5.3]{GKP14a}.
\end{enumerate}

\end{setup}

\
\subsection{Proof of Theorem~\ref{NS-result}} Observe that we only need to treat the stable case. This is because if $\sF=\oplus_i\sF_i$ is polystable, then, for each $i$, as $\mu_{\hbar}\sF_i=0$, the Hodge-index theorem implies that $\widehat \chern_1^2(\sF_i)\cdot H_1\cdot \ldots \cdot H_{n-2} \leq 0$ and so the Bogomolov inequality (\ref{Bogy}) 
\begin{equation*}
-2\cdot \widehat \ch_2(\sF_i)\cdot  H_1\cdot \ldots \cdot H_{n-2} \geq \frac{-1}{r_i}\cdot \widehat \chern_1^2(\sF_i)\cdot  H_1\cdot \ldots \cdot H_{n-2}\,\ \ (r_i:=\rank (\sF_i))
\end{equation*}
gives $\widehat \ch_2(\sF_i) \cdot H_1\cdot \ldots \cdot H_{n-2} \leq 0$ for all $i$. Hence, the additivity of the second $\bQ$-Chern character implies that every $\hbar$-stable component $\sF_i$ verifies $\widehat \ch_2(\sF_i)\cdot H_1\cdot \ldots \cdot H_{n-2}=0$. 
\\[-2mm]

We first assume that $(\sF|_{X_{\reg}})^{\an}$ is given by an irreducible unitary representation of $\pi_1(X_{\reg})$. Then, for every smooth irreducible curve $C$ cut out by general hyperplane sections
corresponding to high enough multiples of a polarization $\hbar$, the restriction $(\sF|_C)^{\an}$ also comes from an irreducible unitary representation of $\pi_1(C)$ via the surjectivity (by the Lefschez theorem) of the push-forward of the fundamental group induced by the inclusion $C\into X_{\reg}$. Now, according to the classical result of Narasimhan and Seshadri, the bundle $(\sF|_C)^{\an}$ is stable with degree zero. Therefore $\sF$ is stable of degree zero with respect to any polarization $\hbar$ (and hence is generically semi-positive). This gives one direction of the theorem.\\[-2mm]

To prove the reverse direction, let $S$ be the klt surface 
defined in Setup~\ref{setup}. As $\sF|_SS$ is a reflexive stable sheaf
(Property~\ref{enum:properties}.\ref{prop:2}). 
 Hence by (\ref{invariant-HE}), we find that $(\sF|_S)^{\an}|_{S_{\reg}}$ is defined by an irreducible unitary representation $\rho_{S}:\pi_1(S_{\reg})\to GL(r,\mathbb{C})$. On the other hand, the isomorphism 
 $\pi_1(S_{\reg})\cong \pi_1(X_{\reg})$ (Property~\ref{enum:properties}.\ref{prop:3})
 gives rise to an irreducible unitary representation $\rho:\pi_1(X_{\reg})\to GL(r,\mathbb{C})$. That is, there exists a locally-free, flat, analytic sheaf $\sG^\circ$ on $X_{\reg}$ (coming from an irreducible unitary representation) whose restriction to $S$ is isomorphic to $\sF_S^{\an}$:
\begin{equation}\label{iso:1}
   \sG^\circ|_{S_{\text{reg}}}\cong \sF|_{S_{\text{reg}}}.
       \end{equation}
Now let $\sG\in \mathcal V_X$ be the extension of $\sG^\circ$ 
in the sense of Setup~\ref{setup}. As $\sG$ and $\sF$ are both reflexive, 
the isomorphism~\ref{iso:1} implies that $\sG|_S \cong \sF|_S$.
Theorem~\ref{NS-result} now follows from Property~\ref{enum:properties}.\ref{prop:4}.
\hfill $\Box$\\[-2mm]

The next corollary is now an immediate consequence of Theorem~\ref{NS-result} and the result on extending flat sheaves 
across the singular locus of a klt variety after going to a suitable cover (where the contributions of the singularities to the algebraic fundamental group of the smooth locus disappear), cf.~\cite[Thm.~1.14]{GKP14a}. %\\[-5mm]

\begin{cor}[Desingularization of (Poly)Stable Reflexive Sheaves with Vanishing $\bQ$-Chern Classes Up to a Finite Quasi-\'etale Cover]\label{SirSimon} Let $X$ be a klt projective variety. There exists a finite Galois morphism $f:Y\to X$ \'etale over $X_{\reg}$ with Galois group $G$ such that $f^{[*]}\sF$ is a locally-free sheaf given by a $G$-equivariantly irreducible unitary representation of $\pi_1(Y)$ (respectively, a direct sum of such sheaves) for every reflexive sheaf $\sF$ on $X$ verifying the conditions (\ref{assumps}.\ref{assume:1}) and (\ref{assumps}.\ref{assume:2}) in Theorem~\ref{NS-result}.

This holds in particular for rank-one reflexive sheaves associated to $\mathbb Q$-Cartier divisors that are numerically equivalent to zero. \\[-3mm]
\end{cor}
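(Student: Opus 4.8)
The plan is to derive Corollary~\ref{SirSimon} by combining Theorem~\ref{NS-result} with the extension results (Theorem~\ref{alg-pi} and Proposition~\ref{extension}), while carefully producing a \emph{single} cover $f\colon Y\to X$ that works uniformly for all sheaves $\sF$ as in the hypotheses. First I would invoke Theorem~\ref{alg-pi} to obtain a finite Galois morphism $f\colon Y\to X$, \'etale in codimension-$1$ with $Y$ normal, such that the inclusion $Y_{\reg}\into Y$ induces an isomorphism $\hat\pi_1(Y_{\reg})\cong\hat\pi_1(Y)$. Since $X$ is klt and $f$ is \'etale in codimension-$1$, Remark~\ref{klt-orbifold} guarantees that $Y$ is again klt and has only quotient singularities in codimension-$2$, so all the $\bQ$-Chern class machinery applies equally on $Y$. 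The crucial point here is that $f$ depends only on $X$, not on the individual sheaf $\sF$, so the resulting cover is indeed independent of $\sF$ as claimed.

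Next I would transfer the stability and vanishing hypotheses from $\sF$ to $\sG:=f^{[*]}\sF$. By Lemma~\ref{gsp}, semistability of $\sF$ with respect to $\hbar$ is equivalent to semistability of $\sG$ with respect to $f^*\hbar$; for the stable (respectively polystable) case one argues that the maximal-slope subsheaf of $\sG$ is $\mathrm{Gal}(Y/X)$-invariant and descends, so $\sF$ stable forces $\sG$ to be a polystable sheaf whose summands are permuted transitively by the Galois group (hence $\sG$ is itself polystable with equal-slope pieces; I would record that $\sG$ satisfies hypothesis~(\ref{assumps}.\ref{assume:1}) up to replacing ``stable'' by ``polystable'', which is all Theorem~\ref{NS-result} requires). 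By Lemma~\ref{Q-projection}, the vanishing conditions $\widehat\ch_1(\sF)\cdot H_1\cdots H_{n-1}=0$ and $\widehat\ch_2(\sF)\cdot H_1\cdots H_{n-2}=0$ pass to the corresponding conditions for $\sG$ with respect to the polarization $f^*\hbar$. Thus $\sG$ on $Y$ satisfies the hypotheses of Theorem~\ref{NS-result}.

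Applying Theorem~\ref{NS-result} on $Y$ then shows that $(\sG|_{Y_{\reg}})^{\an}$ is locally free and comes from a unitary representation $\rho\colon\pi_1(Y_{\reg})\to GL(r,\bC)$, which is irreducible (respectively a direct sum of irreducibles) in the stable (respectively polystable) case. Because $\rho$ is unitary, it factors through the \emph{profinite} completion only after noting that the extension result is phrased for representations of $G=\hat\pi_1(Y)$; here I would use that the construction of $f$ in Theorem~\ref{alg-pi} arranges $\hat\pi_1(Y_{\reg})\cong\hat\pi_1(Y)$, so that the flat sheaf $\sG^\circ:=(\sG|_{Y_{\reg}})^{\an}$ falls within the scope of Proposition~\ref{extension}. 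That proposition then produces a locally-free analytic sheaf $\sF_Y$ on all of $Y$, given by a representation of $\hat\pi_1(Y)$, with $\sF_Y|_{Y_{\reg}}\cong\sG^\circ$. Finally, since $\sF_Y$ and $f^{[*]}\sF$ are both reflexive and agree on $Y_{\reg}$ (whose complement has codimension $\geq 2$), they are isomorphic; this identifies $f^{[*]}\sF$ with a locally-free sheaf given by the desired (irreducible, unitary, $G$-equivariant) representation of $\pi_1(Y)$.

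The main obstacle I anticipate is the interface between the \emph{topological} fundamental group $\pi_1(X_{\reg})$ that Theorem~\ref{NS-result} feeds into the argument and the \emph{algebraic (profinite)} fundamental group $\hat\pi_1$ appearing in Theorem~\ref{alg-pi} and Proposition~\ref{extension}. A unitary representation of $\pi_1(Y_{\reg})$ need not factor through $\hat\pi_1(Y_{\reg})$ in general, so the key step is to verify that the representations arising here are exactly the ones to which the extension theorem applies--this is precisely where one must exploit that the sheaves in question have \emph{finite} monodromy after passing to the cover $Y$, a consequence of the local fundamental groups becoming trivial along the lines of Xu's finiteness result underlying Theorem~\ref{alg-pi}. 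Once this compatibility is secured, the equivariance of the representation and the numerical triviality of $\det(f^{[*]}\sF)$ (which follows from $\widehat\ch_1=0$ via Lemma~\ref{Q-projection} and the discussion after Definition~\ref{triviality}) are routine, and the final sentence concerning rank-one reflexive sheaves attached to numerically trivial $\bQ$-Cartier divisors follows immediately by taking $\sF=\sO_X(D)$, which is automatically stable of rank one with the requisite vanishing.
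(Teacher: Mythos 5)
Your proposal is correct and follows essentially the same route as the paper, which obtains the corollary by pulling back along the cover furnished by Theorem~\ref{alg-pi} (with Lemmas~\ref{gsp} and~\ref{Q-projection} transferring the stability and vanishing hypotheses), applying Theorem~\ref{NS-result}, and then extending across the singular locus via Proposition~\ref{extension}. The one imprecision is your claim that the sheaves acquire \emph{finite} monodromy after passing to $Y$: the global unitary monodromy is infinite in general (e.g.\ for quotients of Abelian varieties), and what the cover actually achieves is the triviality of the \emph{local} monodromy around the singular points, which is what allows the representation of $\pi_1(Y_{\reg})$ to descend to $\pi_1(Y)$ and places the flat sheaf within the scope of the extension result.
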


\subsection{A proof of Theorem~\ref{main} via polystability}\label{polysection}\label{proof}
Let $g:Z \to X$ be the global index-$1$ cover provided in the 
last part of Corollary~\ref{SirSimon} (see also the proof of Proposition~\ref{SS-KLT}). Then $K_Z=g^{[*]}K_X$ is a numerically trivial Cartier divisor and $Z$ has only canonical singularities. According to the main result of \cite{GKP12}, there exists a quasi-\'etale cover $h:\widehat Z\to Z$ where $\sT_{\widehat Z}$ is polystable with respect to the polarization $h'^*(\hbar):=(h'^*(H_1),\ldots, h'^*(H_{n-1}))$ with $h':=h\circ g$. Since both $g$ and $h$ are unramified in codimension-$1$, we have the sheaf isomorphism $\det \sT_{\widehat Z}\cong \det(h'^{[*]}\sT_X)$ by the ramification formula. As a result, the natural inclusion of reflexive sheaves $\sT_{\widehat Z}\to h'^{[*]}\sT_X$ is an isomorphism.  On the other hand, as the $\bQ$-Chern classes behave well under quasi-\'etale morphisms (see Lemma~\ref{Q-projection}), the assumption $\widehat \chern_2(\sT_X)\cdot H_1\cdot \ldots\cdot H_{n-2}=0$ implies that 
\begin{equation*}
\widehat \chern_2(\sT_{\widehat Z})\cdot h'^*(H_1)\cdot \ldots\cdot h'^*(H_{n-2})=0.
\end{equation*}
Therefore by Corollary~\ref{SirSimon} we have a morphism $f:Y\to \widehat Z$ that is \'etale in codimension-$1$ such that $\sT_Y=f^{[*]}\sT_{\widehat Z}$ is locally-free (and flat). According to the resolution of Zariski-Lipman conjecture for klt spaces~\cite[Thm.~6.1]{GKKP11}, this implies that $Y$ is smooth. In particular $X$ has only quotient singularities. But again, as $f$ is \'etale in codimension-$1$, we have $K_Y\equiv 0$ and $\chern_2(\sT_Y)\cdot f^*(h'^*(H_1))\cdot \ldots \cdot f^*(h'^*(H_{n-2}))=0$.\\[-2mm]

The ``if" direction of Theorem~\ref{main} now follows from the classical uniformization result due to the fundamental work of Yau in \cite{Yau78} on the existence of a Ricci-flat metric in this case. See for example~\cite[\S IV.Cor.~4.15]{Kob87} or argue directly that the Ricci flat metric is actually flat using the Riemann bilinear relations (\ref{bilinear}) so that $\pi_1(Y)$ must act by isometry on the flat universal cover $\bC^n$ and must hence be an extension of a lattice in $\bC^n$ by a finite group.\\[-2mm]

To prove the ``only if" direction of Theorem~\ref{main}, notice that if $X$ is a finite quotient of an Abelian variety by a finite group acting freely in codimension-$1$, then it follows from the definition that $\widehat \chern_1(\sT_X)=\widehat \chern_2(\sT_X)=0$. We observe easily that $X$ is normal in this case and thus according to~\cite[Prop.~5.20]{KM98} that $X$ has at most klt singularities. \qed\\[-5mm]

\

\section{Semistable $\mathbb Q$-sheaves and their  correspondence with flat sheaves}\label{Simpson}

We give a proof of Theorem~\ref{finite-resolution} following the classical approach via the Jordan-H\"older filtration of a semistable sheaf, which also explicit a natural and necessary part of Simpson's proof of his celebrated correspondence. Theorem~\ref{main} is then a corollary of the local freeness result (up to a finite cover) of Theorem~\ref{finite-resolution} and the generic semipositivity theorem of the cotangent sheaf due to Miyaoka \cite{Miy85, Miy87}. 
We remark that our orbifold 
generalizations in Theorem~\ref{finite-resolution} of this 
important result of Simpson seem not to 
be previously known even in dimension $2$, 
when $X$ is an orbifold (for which the 
orbifold fundamental group is given by
$\pi_1(X_{\reg})$).

\subsection{Preparation for the Proof of Theorem~\ref{finite-resolution}}
If $\sE$ is $\hbar$-stable, then the result follows from Theorem~\ref{NS-result} and Corollary~\ref{SirSimon}.
In the semistable case we need to construct a Jordan-H\"older filtration for 
$\bQ$-sheaves over $\bQ$-surfaces. 

\begin{prop}[$\bQ$-Jordan H\"older filtration]\label{Q-JH} Let $S$ be a normal projective 
surface with only quotient singularities and $A\subset S$ an ample divisor. Let $\sG$ 
be a $A$-semistable, reflexive sheaf on $S$. Then there exists a (non-canonical) 
increasing filtration 
\begin{equation}\label{eq:filter-cover}
0= \sG_0 \subset  \sG_1 \subset \sG_2 \subset \ldots \subset 
 \sG_{t-1} \subset  \sG_t = \sG,
  \end{equation}
where every $\sQ_i:= \sG_i / \sG_{i-1}$ is a torsion-free $\bQ$-sheaf, $A$-stable, 
verifying the equality $\mu_A(\sQ_i)=\mu_A(\sG)$.

\end{prop}

\begin{proof} Let $p: \widehat S\to S$ be the global, Galois, finite morphism 
in Subsection~\ref{global}, constructed from the natural $\bQ$-structure associated to $S$, 
with $L:=\Gal(\widehat S/S)$. Denote $\widehat \sG:=p^{[*]}(\sG)$. 
Notice that the locally-free, $L$-sheaf $\widehat \sG$
is, with respect to $\widehat A:=p^*(A)$, semistable (see~\cite[Lem~3.2.2]{HL10}) but not $L$-stable, 
as the reflexive pull-back of the saturated, destablizing subsheaf of $\sG$  
destablizes $\widehat \sG$ as a $L$-subsheaf. 

\begin{subclaim}[First term of $\bQ$-$\rm{JH}_{\bullet}$]\label{claim:Q-JH} In the above setting, there exists a $L$-stable, 
saturated, $L$-subsheaf $\widehat \sG_1$ of $\widehat \sG$ with $\mu_{\widehat A}(\widehat \sG_1)=
\mu_{\widehat A}(\widehat \sG)$. 

\end{subclaim}
\noindent \emph{Proof of Claim~\ref{claim:Q-JH}.} Aiming for a contradiction, assume that 
there is no saturated, destabilizing $L$-subsheaf of $\widehat \sG$ that is $L$-stable.
Let $\widehat \sF$ be a saturated, destabilizing $L$-subsheaf. 
Notice that $\widehat \sF$ is, away from some finite number of isolated points, pull-back 
of a locally-free sheaf on $S$ (see~\cite[Prop.~2.16]{GKPT15}). 
More precisely the sheaf $\sF:=p_*(\widehat \sF)^L$ on $S$, formed 
by taking the $L$-invariant sections of $\widehat \sF$ , 
is reflexive and we have 
$\widehat \sF:=p^{[*]}(\sF)$. It follows that $\sF$ is semistable subsheaf of $\sG$ with 
$\mu_A(\sF)= \mu_A(\sG)$. By repeating this argument we can thus 
construct a decreasing sequence of chain of sheaves with strictly smaller
ranks and with equal slope which has to stablize (as a chain), contradicting the assumption 
that $\widehat \sG$ does not have a saturated, $L$-stable, destabilizing, $L$-subsheaf.
This proves Claim~\ref{claim:Q-JH}.

By using Claim~\ref{claim:Q-JH} repeatedly we now obtain an increasing 
filtration of $\widehat \sG$ 

  \begin{equation}\label{JH1}
0=\widehat \sG_0\subset  \widehat \sG_1\subset \widehat \sG_2 \subset \ldots \subset \widehat \sG_{t-1}\subset \widehat \sG_{t}=\widehat \sG,
      \end{equation}
where $\widehat \sQ_i:= \widehat \sG_i / \widehat \sG_{i-1}$ is a torsion-free, $L$-stable, 
$L$-sheaf with $\mu_{\widehat A}(\widehat \sQ_i)=\mu_{\widehat A}(\sG)$.
Noticing that $p_*(\cdot)^L$ is an exact functor, the required filtration~\ref{eq:filter-cover} can be 
constructed by taking the $L$-invariant sections of the filtration~\ref{JH1}.

\end{proof}

The next two lemma will allow us to detect when a filtration by torsion-free 
sheaves defines an extension by locally-free sheaves. These are 
technical tools that we shall need in the course of the proof of 
Theorem~\ref{finite-resolution}.

\begin{lem}[Reflexivity of quotients]\label{lem:reflex} Let $T$ be a normal projective surface with only 
quotient singularities,  
$H$ an ample divisor and 
$\sG$ a reflexive, $\sO_T$-module sheaf on $T$. Assume  
$\widehat \ch_1(\sG)\cdot H=0$ and $\widehat \ch_2(\sG)=0$. Let

\begin{equation}\label{filt:1}
0=\sG_0\subset   \sG_1\subset  \sG_2 \subset \ldots \subset 
\sG_{t-1}\subset  \sG_{t}= \sG
      \end{equation}
be an extension of $\sG$ by $H$-semistable, 
torsion-free, $\bQ$-sheaves $\sQ_i:=\sG_i/\sG_{i-1}$ 
with $\mu_H(\sQ_i)=0$. Then $\sQ_i$ is reflexive and that $\widehat \ch_2(\sQ_i)=0$, for all $i$.

\end{lem}

\begin{proof} The torsion-freeness of $\sQ_i$ implies that it differs from 
$\overline{\sQ_i}:=(\sQ_i)^{**}$ at most on a codimension two 
subset of $T$ so that $\overline{\sQ_i}$ inherits the stability conditions of $\sQ_i$.
By \cite[Lem.~10.9]{Meg92}, we have $\widehat \chern_2(\overline{\sQ_i})\leq \widehat \chern_2(\sQ_i)$ with equality if
and only if $\overline{\sQ_i}= \sQ_i$. As $\widehat \chern_1(\overline{\sQ_i})=\widehat \chern_1(\sQ_i)$, this is the same as $\widehat \ch_2(\sQ_i)\leq \widehat \ch_2(\overline{\sQ_i})$. Since $\widehat \chern_1(\overline{\sQ_i})\cdot H=0$, 
the Hodge-index theorem implies that 
\begin{equation}\label{Hodge-index}
\widehat \chern_1^2(\overline{\sQ_i})\leq 0.
\end{equation}
But as $\overline{\sQ_i}$ is $H$-semistable, we have, thanks to the Bogomolov inequality (\ref{Bogy}), that
\begin{align*} -2 \cdot \widehat\ch_2(\overline{\sQ_i})=
2\cdot \widehat \chern_2(\overline{\sQ_i})-\widehat \chern_1^2(\overline{\sQ_i}) &\geq \frac{-1}{r_i} \cdot \widehat \chern_1^2(\overline{\sQ_i})\ \ \ \ \ \ \ (r_i=\rank(\overline{\sQ_i}))\\
& \geq 0, \ \ \text{by the inequality (\ref{Hodge-index})}.
\end{align*}
That is, $\widehat \ch_2(\overline{\sQ_i})\leq 0$ for all $i$. Together, by the additivity of the second $\mathbb{Q}$-Chern character for direct sums, we have 
\begin{equation}\label{eq:2}
0=\widehat \ch_2(\sG)=\sum_i\widehat \ch_2(\sQ_i)\leq \sum_i\widehat \ch_2(\overline{\sQ_i})\leq 0,
\end{equation}
and thus $\widehat \ch_2(\overline{\sQ_i})=\widehat \ch_2(\sQ_i)=0$. It follows that $\overline{\sQ_i}=\sQ^i$, that is $\sQ_i$ is reflexive, for all $i$.

\end{proof}

\begin{lem}[Locally-freeness of semistable $\bQ$-sheaves]\label{lem:local-free} Let $T$ be 
a projective surface with only quotient singularities and 
$\widehat \pi_1(T_{\reg})\cong \widehat \pi_1(T)$. Let $H$ be an ample 
divisor. Any $H$-semistable reflexive sheaf $\sG$ with $\widehat \ch_1(\sE)\cdot H=0$
 and $\widehat \ch_2(\sG)=0$ is locally-free. 

\end{lem}

\begin{proof} If $\sG$ is $H$-stable, then the $\sG|_{T_{\reg}}$ is flat by 
the assertion~\ref{invariant-HE} 
and therefore is locally-free thanks to the assumption 
$\widehat \pi_1(T_{\reg})\cong \widehat \pi_1(T)$, cf.~\cite[Thm.~1.14]{GKP14a}. 
For the case where $\sE$ is strictly semistable, we consider a $\bQ$-Jordan-H\"older 
filtration of $\sE$:

\begin{equation}\label{filt:3}
0=\sG_0\subset   \sG_1\subset  \sG_2 \subset \ldots \subset 
\sG_{t-1}\subset  \sG_{t}= \sG.
      \end{equation}
Denote the torsion-free quotients $\sG_i/\sG_{i-1}$ by $\sQ_i$. As each $\sQ_i$ is 
stable with respect to $H$ and verifies the equality $\widehat \ch_1(\sQ_i)\cdot H=0$, Lemma~\ref{lem:reflex} applies and we 
find that $\sQ_i$ is reflexive and $\widehat \ch_2(\sQ_i)=0$, for all $i$, that is each $\sQ_i$ is an $H$-stable, reflexive 
sheaf with vanishing Chern classes. According to the assertion~\ref{invariant-HE}, it follows that 
each $\sQ_i|_{T_{\reg}}$ is flat. The assumption $\widehat \pi_1(T_{\reg})\cong \widehat \pi_1(T)$ now implies that every $\sQ_i$ is locally-free and therefore so is $\sG$.

\end{proof}

\

\subsection{Proof of Theorem~\ref{finite-resolution}}
We first consider the case when $X=S$ is a surface. Let $f:T \to S$ be the Galois, \'etale in codimension-$1$ cover given in Theorem~\cite[Thm.~1.4]{GKP14a}, with Galois group $G$, over which locally-free flat sheaves on $\widehat T_{\reg}$ extend (\cite[Thm.~1.14]{GKP14a}). Denote the reflexive  $G$-sheaf $f^{[*]}\sE$ by $\widehat \sE$, and notice that $\widehat \sE$ is semistable, cf.~\cite[Lem.~3.2.2]{HL10}.
 Let
\begin{equation}\label{JH}
0=\widehat \sE_0\subset  \widehat \sE_1\subset \widehat \sE_2 \subset \ldots \subset  \widehat \sE_{t-1}\subset \widehat \sE_{t}=\widehat \sE
      \end{equation}
be the $G$-equivariant, $\bQ$-Jordan-H\"older filtration of $\widehat \sE$ constructed in Proposition~\ref{Q-JH}, 
that is each quotient $\widehat \sQ_i:=\widehat \sE_i / \widehat \sE_{i-1}$ is a torsion-free, $G$-stable, $\bQ$-sheaf with the same slope, equal to $\mu(\widehat \sE)$. 
According to Lemma~\ref{lem:reflex}, we find that each $\widehat \sQ_i$ is reflexive and 
therefore, by Lemma~\ref{lem:local-free}, is locally-free. 
This in turn implies that $\widehat \sE$, as an extension locally-free sheaves, is 
also locally-free, as an analytic sheaf. 
Let $\pi: \wtilde T \to T$ be 
a $G$-equivariant resolution. 
The local freeness of the terms in the filtration of $\widehat \sE$ as well as in its grading and
the $G$-equivariance implies that the filtration lifts to a locally free filtration of $\pi^*(\widehat \sE)$ with terms $\pi^*(\widehat \sE_i)$.
By Proposition~\ref{lift}, the locally free sheaves $\pi^*(\widehat \sQ_i)$ are all $G$-stable with respect to a fixed $G$-invariant polarization. As before, every $G$-stable grading $\pi^*(\widehat \sQ_i)$ of the filtration of $\pi^*(\widehat \sE)$ induced by that of $\widehat \sE$ admits a $G$-invariant unitarily flat connection. Hence, $\pi^*(\widehat \sE)$ is an iterated extension by unitarily flat and locally free sheaves which is therefore, by Simpson's correspondence~\cite[Cor.~3.10]{Sim92} (and the remarks immediately after that corollary), endowed with a unique holomorphic flat connection. Since the filtration is $G$-equivariant, the uniqueness of  this flat connection implies that it is $G$-invariant. This yields a $G$-invariant flat connection on 
${\widehat \sE}|_{T_{\reg}}$ (see the arguments right before the assertion~\ref{invariant-HE}). 
As a result the restriction ${\sE}|_{S_{\reg}}$ 
comes from a representation of $\pi_1(S_{\reg})$.
This proves Theorem~\ref{finite-resolution} when $\dim(X)=2$. \\[-2mm]

For higher dimensional $X$, 
exactly the same argument as that of Section~\ref{KH} gives the semistable analog of Theorem~\ref{NS-result} and Corollary~\ref{SirSimon}. More precisely, given a reflexive sheaf, 
semistable with respect to $(H_1, \ldots, H_{n-1})$ and with zero Chern characters, let 
$S$ be the complete intersection surface defined in Setup~\ref{enum:properties}. According to 
Property~\ref{enum:properties}.\ref{prop:2}, the restriction $\sE|_S$ is reflexive and semistable. 
As $S$ is klt (Property~\ref{enum:properties}.\ref{prop:1}), the above arguments 
for the case $\dim(X)=2$ imply that $\sE|_{S_{\reg}}$ is flat. Property~\ref{enum:properties}.\ref{prop:3} 
now implies that there exists a reflexive sheaf $\sG\in \mathcal V_X$, where $\mathcal V_X$ 
is the family defined in Setup~\ref{setup}, such that $\sG|_S \cong \sE|_S$. According 
to Property~\ref{enum:properties}.\ref{prop:4} it thus follows that $\sG\cong \sE$, i.e. 
$\sE|_{X_{\reg}}$ is flat. Theorem~\ref{finite-resolution} now follows from~\cite[Thm.~1.14]{GKP14a}.

\subsection{A proof of Theorem~\ref{main} via semistability}

According to Proposition~\ref{SS-KLT} the tangent sheaf $\sT_X$ is generically semi-positive, that is, as $K_X\equiv 0$, $\sT_X$ is semistable independent of the choice of polarization. Therefore, Theorem~\ref{finite-resolution} says that there exits a finite quasi-\'etale cover $f:Y \to X$ such that $f^{[*]}\sT_X$ is locally-free. But since $f$ is unramified in codimension-$1$, we have the sheaf isomorphism $\det \sT_Y\cong \det(f^{[*]}\sT_X)$ by the ramification formula. As a result, the natural inclusion of reflexive sheaves $\sT_Y\to f^{[*]}\sT_X$ is an isomorphism and $\sT_Y$ is locally-free. 
The rest of the proof is identical to that of the polystable case in Section~\ref{polysection}. \qed\\[-2mm]

We now briefly explain the equivalence of the two sets of conditions $\{$(\ref{conditions:1}.\ref{first}), (\ref{conditions:1}.\ref{second})$\}$ and $\{$(\ref{conditions:2}.\ref{first0}), (\ref{conditions:2}.\ref{second0})$\}$. 

\begin{explanation}\label{explaining} Assume that $X$ is a klt projective variety verifying condition (\ref{conditions:1}.\ref{first}), i.e. $K_X\equiv 0$. Then by Lemma.~\ref{SS-KLT} we know that there exists a finite quasi-\'etale morphism $f:Y\to X$ such that $\sT_Y$ is generically semipositive. But $f$ being \'etale in codimension-$1$ implies that $\sT_Y=f^{[*]}\sT_X$ and thus $\sT_X$ is also generically semipositive by Lemma~\ref{gsp}, establishing condition~(\ref{conditions:2}.\ref{first0}). As $\widehat \chern_1(\sT_X)$ agrees with $\chern_1(\sT_X)=\chern_1(K_X)$ in codimension-$1$ by construction, we have $\widehat \chern_1(\sT_X)|_S\equiv 0$ for a general surface $S$ cut out by $n-2$ very ample divisors and thus $\widehat \chern_1^2(\sT_X)\cdot S= 0$. Hence (\ref{conditions:2}.\ref{second0}) follows from (\ref{conditions:1}.\ref{second}). Conversely, assume that the conditions (\ref{conditions:2}.\ref{first0}) and (\ref{conditions:2}.\ref{second0}) hold. Then, by Theorem~\ref{finite-resolution} we know that there exists a finite quasi-\'etale morphism $f:Y\to X$ such that 
$\sT_Y=f^*(\sT_X)$ is flat and locally free so that it has vanishing (orbifold) Chern classes and, in particular, $K_Y$ is numerically trivial.   The ramification formula $K_Y=f^*(K_X)$ (together with the projection formula) implies that $K_X\equiv 0$ and condition~(\ref{conditions:1}.\ref{second}) follows by Lemma~\ref{Q-projection}. \\[-3mm]

\end{explanation}

\
\

\providecommand{\bysame}{\leavevmode\hbox to2em{\hrulefill}\thinspace}
\providecommand{\MR}{\relax\ifhmode\unskip\space\fi MR }
% \MRhref is called by the amsart/book/proc definition of \MR.
\providecommand{\MRhref}[2]{%
  \href{http://www.ams.org/mathscinet-getitem?mr=#1}{#2}
}
\providecommand{\href}[2]{#2}

\end{document}